\numberwithin{equation}{section}
\theoremstyle{plain}
\newtheorem{thm}{Theorem}[section]
\newtheorem{cor}[thm]{Corollary}
\newtheorem{lem}[thm]{Lemma}
\newtheorem{prop}[thm]{Proposition}
\theoremstyle{definition}
\newtheorem{remark[thm]}{Remark}
\def\Hom{\protect\operatorname{Hom}}
\def\Im{\protect\operatorname{Im}}
\def\Ind{\protect\operatorname{Ind}}
\def\Ker{\protect\operatorname{Ker}}
\def\cat{\protect\operatorname{cat}}
\def\hd{\protect\operatorname{hd}}
\def\cd{\protect\operatorname{cd}}
\def\gd{\protect\operatorname{gd}}
\newcommand \pa[2]{\frac{\partial #1}{\partial #2}}
\def\scr{\mathcal}
\def\A{{\scr A}}
\def\B{{\scr B}}
\def\Z{{\mathbb Z}}
\def\Q{{\mathbb Q}}
\def\R{{\mathbb R}}
\def\k{{\mathbf k}}
\def\I{{\mathbf I}}
\def\B{{\mathcal B}}
\def\1{\hbox{\rm\rlap {1}\hskip.03in{\rom I}}}
\def\Bbbone{{\rm1\mathchoice{\kern-0.25em}{\kern-0.25em}
{\kern-0.2em}{\kern-0.2em}I}}
\def\pa{\partial}
\def\wt{\widetilde}
\long\def\forget#1\forgotten{} %
\begin{document}

\title[On cohomological dimension of homomorphisms  ]
{On cohomological dimension of homomorphisms  }
\author[A. De Saha]{Aditya De Saha}
\author[A.~Dranishnikov]
{Alexander Dranishnikov}

\thanks{The  second author was supported by Simons Foundation}

\address{A. De Saha, A. Dranishnikov, Department of Mathematics, University
of Florida, 358 Little Hall, Gainesville, FL 32611-8105, USA}
\email{dranish@ufl.edu}
\email{a.desaha@ufl.edu}
\subjclass[2020]
{Primary 20J05,
Secondary 55N25, 20J06  
}

\keywords{cohomological dimension of groups, aspherical manifolds, classifying spaces}

\begin{abstract} 
The (co)homological dimension of homomorphism $\phi:G\to H$ is the maximal number $k$ such that the induced homomorphism is nonzero for some $H$-module.
The following theorems are proven.
\begin{thm}
For every homomorphism $\phi:G\to H$ of a geometrically finite group $G$ the homological dimension of $\phi$ equals the cohomological dimension, $\hd(\phi)= \cd(\phi)$.
\end{thm}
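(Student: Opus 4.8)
The plan is to compare $\phi^{*}$ and $\phi_{*}$ in each degree by passing to Pontryagin dual coefficients. Write $M^{\vee}=\Hom_{\Z}(M,\Q/\Z)$ for the dual of an $H$-module $M$; it is again an $H$-module and $\mathrm{Res}_{\phi}(M^{\vee})=(\mathrm{Res}_{\phi}M)^{\vee}$. Because $\Q/\Z$ is an injective cogenerator of abelian groups, a map of abelian groups is nonzero precisely when its $\Q/\Z$-dual is nonzero, and this is the device I will use to pass between $\phi^{*}$ and $\phi_{*}$. Two universal coefficient statements are relevant. First, for an arbitrary group $\Gamma$ and $\Gamma$-module $A$ the tensor-hom adjunction $\Hom_{\Z\Gamma}(P_{\bullet},A^{\vee})\cong\Hom_{\Z}(P_{\bullet}\otimes_{\Z\Gamma}A,\Q/\Z)$ together with injectivity of $\Q/\Z$ yields a natural isomorphism $H^{k}(\Gamma;A^{\vee})\cong H_{k}(\Gamma;A)^{\vee}$, valid with no finiteness hypothesis. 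Second, there is always a natural evaluation pairing giving a map $\theta_{\Gamma}\colon H_{k}(\Gamma;A^{\vee})\to H^{k}(\Gamma;A)^{\vee}$ in the other direction, but this one is an isomorphism only under finiteness. Both constructions are natural with respect to group homomorphisms, which is what will let me move the restriction maps through.

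The inequality $\hd(\phi)\le\cd(\phi)$ needs no hypothesis on $G$. If $\phi_{*}\colon H_{k}(G;A)\to H_{k}(H;A)$ is nonzero for some $H$-module $A$, then applying the natural isomorphism $H^{k}(\Gamma;A^{\vee})\cong H_{k}(\Gamma;A)^{\vee}$ for $\Gamma=G$ and $\Gamma=H$ and invoking its naturality with respect to $\phi$ identifies $\phi^{*}\colon H^{k}(H;A^{\vee})\to H^{k}(G;A^{\vee})$ with the $\Q/\Z$-dual of $\phi_{*}$. The dual of a nonzero map is nonzero, so $\phi^{*}\ne 0$ and $\cd(\phi)\ge k$; taking $k=\hd(\phi)$ gives the claim.

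For $\cd(\phi)\le\hd(\phi)$ I would use geometric finiteness of $G$ exactly to upgrade $\theta_{G}$ to an isomorphism. A finite $K(G,1)$ gives a resolution $P_{\bullet}\to\Z$ by finitely generated free $\Z G$-modules. For finitely generated projective $P$ the canonical maps $\Hom_{\Z G}(P,A)\cong P^{*}\otimes_{\Z G}A$ and $P\otimes_{\Z G}N\cong\Hom_{\Z G}(P^{*},N)$ are isomorphisms; feeding $A^{\vee}$ through them identifies the chain complex computing $H_{*}(G;A^{\vee})$ with the $\Q/\Z$-dual of the cochain complex computing $H^{*}(G;A)$, and injectivity of $\Q/\Z$ then makes $\theta_{G}\colon H_{k}(G;A^{\vee})\to H^{k}(G;A)^{\vee}$ an isomorphism for all $k$ and $A$. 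Now suppose $\cd(\phi)\ge k$, witnessed by $\phi^{*}\colon H^{k}(H;M)\to H^{k}(G;M)$ nonzero. Naturality of the pairing gives a commuting square whose top edge is $\phi_{*}\colon H_{k}(G;M^{\vee})\to H_{k}(H;M^{\vee})$, whose bottom edge is $(\phi^{*})^{\vee}\colon H^{k}(G;M)^{\vee}\to H^{k}(H;M)^{\vee}$, and whose vertical maps are $\theta_{G}$ and $\theta_{H}$. Since $(\phi^{*})^{\vee}$ is nonzero and $\theta_{G}$ is an isomorphism, the composite $\theta_{H}\circ\phi_{*}=(\phi^{*})^{\vee}\circ\theta_{G}$ is nonzero, hence $\phi_{*}\ne 0$ with coefficients $M^{\vee}$ and $\hd(\phi)\ge k$. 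Only $\theta_{G}$ must be inverted here — $\theta_{H}$ enters merely as a natural map — which is precisely why the finiteness is required of $G$ and not of $H$. Taking $k=\cd(\phi)$ completes the proof.

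The main obstacle I anticipate is the finiteness-dependent isomorphism $\theta_{G}$: one must verify that the degreewise isomorphisms built from finitely generated projective duality are compatible with the differentials and, more importantly, that the resulting comparison is the natural evaluation map $\theta_{G}$ rather than some ad hoc isomorphism, since the argument hinges on the commutativity of the naturality square. The attendant bookkeeping of left versus right module structures (via the involution $g\mapsto g^{-1}$ on $\Z G$) in forming $P_{\bullet}\otimes_{\Z G}A^{\vee}$ is routine but must be carried out consistently.
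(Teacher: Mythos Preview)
Your argument is correct and takes a genuinely different route from the paper. The paper proceeds geometrically: using Davis' trick it replaces $BG$ by a closed orientable aspherical manifold $M$ retracting onto it, then for the inequality $\cd(\phi)\le\hd(\phi)$ it caps the fundamental class $[M]$ with the pulled-back Berstein--Schwarz power $g^*\beta_H^k$, invokes Poincar\'e duality with local coefficients, and detects the resulting homology class by a suitable cocycle (their Proposition~\ref{detect}); for $\hd(\phi)\le\cd(\phi)$ it uses Theorem~\ref{chain} to deform the induced chain map to one vanishing above degree $\cd(\phi)$. Your proof is purely homological algebra: Pontryagin duality $M\mapsto M^\vee$ together with the injective-cogenerator property of $\Q/\Z$ transports nonvanishing of $\phi_*$ to nonvanishing of $\phi^*$ (and vice versa), with the finiteness of $G$ entering only to make the evaluation map $\theta_G\colon H_k(G;M^\vee)\to H^k(G;M)^\vee$ an isomorphism. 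Your approach is shorter, avoids manifolds and Poincar\'e duality entirely, and in fact works under the weaker hypothesis that $G$ is of type $FP_\infty$ (only finite generation of each $P_k$ is used, not finiteness of the resolution). The paper's approach, by contrast, develops machinery---the Berstein--Schwarz class, the reduction to aspherical manifolds, and the boundary modules $B_k(\pi)$---that is reused in the later sections on $\cd_\k(\phi)$ and the product formula, so it earns its keep there. Your acknowledged obstacle (checking that the degreewise isomorphisms assemble into the natural $\theta_G$ so that the square with $\theta_H$ commutes) is a genuine but routine verification: the evaluation pairing $\psi\otimes p\mapsto\bigl(f\mapsto\psi(f(p))\bigr)$ is visibly natural in $P$, hence compatible with differentials and with the chain map $P_\bullet(G)\to P_\bullet(H)$ induced by $\phi$.
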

\begin{thm}
For every homomorphism $\phi:G\to H$ of geometrically finite groups  $\cd(\phi\times\phi)=2\cd(\phi)$.
\end{thm}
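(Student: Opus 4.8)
The plan is to deduce both inequalities from a single cross-product computation after reducing to the most transparent coefficients. Write $n=\cd(\phi)$. A product of geometrically finite groups is geometrically finite, so the first theorem applies to $\phi\times\phi\colon G\times G\to H\times H$ and gives $\cd(\phi\times\phi)=\hd(\phi\times\phi)$, as well as $\cd(\phi)=\hd(\phi)=n$. I would therefore work throughout with homological dimension, where the K\"unneth theorem over a field is cleanest (no $\mathrm{Tor}$ terms, no finiteness needed), and translate back to $\cd$ only at the very end.

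The two nonformal inputs I would establish first are a \emph{field reduction} and a \emph{detection by free modules}. Concretely: (a) some field $\Bbbk$ realizes $\hd(\phi)=n$, i.e. $\phi_*\colon H_n(G;\Bbbk H)\to H_n(H;\Bbbk H)$ is nonzero; and (b) for the product homomorphism, $\hd(\phi\times\phi)$ is already detected by the free coefficient module $\Bbbk[H\times H]$ over some field. Statement (a) is a Bockstein/universal-coefficient argument carried out over the group ring (the relevant homology is finitely generated over $\Bbbk H$ because $G$ is geometrically finite), and statement (b) is the homomorphism analogue of the classical fact that the cohomological dimension of an $\mathrm{FP}$ group is detected by its integral group ring. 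I expect (b) to be the main obstacle: it is exactly the ingredient that makes the upper bound sensitive to $\phi$ rather than merely to $G$ (note that the crude estimate $\cd(\phi\times\phi)\le\cd(G\times G)=2\cd(G)$ is useless, since $\cd(G)$ may exceed $\cd(\phi)$).

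Granting these, the core is pure K\"unneth. Since $\Bbbk[H\times H]=\Bbbk H\otimes_{\Bbbk}\Bbbk H$ restricts along $\phi\times\phi$ to $(\Bbbk H)\otimes_{\Bbbk}(\Bbbk H)$ as a $\Bbbk[G\times G]=\Bbbk G\otimes_{\Bbbk}\Bbbk G$-module, the homological K\"unneth theorem over $\Bbbk$ splits
\[
H_k(G\times G;\Bbbk[H\times H])\;\cong\;\bigoplus_{i+j=k}H_i(G;\Bbbk H)\otimes_{\Bbbk}H_j(G;\Bbbk H),
\]
compatibly with $(\phi\times\phi)_*=\phi_*\otimes\phi_*$ and with the analogous splitting over $H\times H$. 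For the upper bound, take the field $\Bbbk$ and the free module furnished by (b) for $\phi\times\phi$; then $(\phi\times\phi)_*$ can be nonzero in bidegree $(i,j)$ only when $\phi_*$ is nonzero in degrees $i$ and $j$, and since $\hd_{\Bbbk}(\phi)\le\hd(\phi)=n$ this forces $i,j\le n$, hence $\hd(\phi\times\phi)=k\le 2n$. For the lower bound, use (a) to pick $\alpha\in H_n(G;\Bbbk H)$ with $\phi_*\alpha\ne 0$; because the tensor square of a nonzero vector over a field is nonzero, the cross product satisfies $(\phi\times\phi)_*(\alpha\times\alpha)=\phi_*\alpha\otimes\phi_*\alpha\ne 0$, so $\hd(\phi\times\phi)\ge 2n$.

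Combining the two inequalities gives $\hd(\phi\times\phi)=2n$, and the first theorem upgrades this to $\cd(\phi\times\phi)=\hd(\phi\times\phi)=2n=2\cd(\phi)$. Apart from the reduction lemmas, the remaining steps are the standard naturality of the homology cross product under induced maps and the chain-level identification $C_*(G\times G;\Bbbk)\simeq C_*(G;\Bbbk)\otimes_{\Bbbk}C_*(G;\Bbbk)$, valid since $\Bbbk[G\times G]=\Bbbk G\otimes_{\Bbbk}\Bbbk G$ and a tensor product of projective resolutions is projective. The points I would watch most carefully are that this identification is compatible with the restriction of the free coefficient module, and that the field in (a) genuinely achieves the integral value $n$ rather than some smaller $\hd_{\Bbbk}(\phi)$.
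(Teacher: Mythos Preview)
Your lower-bound route is essentially the paper's: reduce to a field $\k$ with $\cd_\k(\phi)=\cd(\phi)$ and apply the K\"unneth formula over $\k$. (That field reduction is one of the paper's main lemmas, proved via Davis' trick and Poincar\'e duality rather than by a routine Bockstein argument; note also that for the lower bound you only need $\hd_\k(\phi)=n$ witnessed by some $\k H$-module, not the stronger free-module claim in your (a).)

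The upper bound, however, has a genuine gap. Your step (b)---that $\hd(\phi\times\phi)$ is detected by the free module $\k[H\times H]$ for some field---is unproved and is not a known fact. The dimension-shifting argument that detects $\cd(G)$ by $\Z G$ for groups of type $\mathrm{FP}$ does not transfer to the nonvanishing of an \emph{induced map} between two (co)homologies; the paper's closest statement detects $\cd(\phi)$ only with the boundary module $B_k(H)$, which is neither free nor an external tensor product. Without (b) you cannot restrict attention to coefficients of the form $A\otimes_\k B$, and the K\"unneth splitting tells you nothing about a general $\k[H\times H]$-module, so your upper-bound argument collapses. The paper obtains the upper bound by a completely different mechanism that bypasses coefficients: it proves that if $\cd(\phi)=n$ then the chain map $\phi_*\colon P_*(G)\to P_*(H)$ between projective resolutions is chain homotopic to one vanishing in all degrees above $n$; tensoring two such chain maps shows $(\phi\times\phi)_*$ is chain homotopic to a map vanishing above degree $2n$, whence $\cd(\phi\times\phi)\le 2n$ for \emph{every} coefficient module at once. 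You should replace (b) with this chain-level argument.
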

\end{abstract}

\maketitle

\section{Introduction}

The definition of  cohomological dimension $\cd(G)$ of discrete group $G$ goes back the middle of the  last century. 
The main results of the cohomological dimension theory of groups can be found in the books~\cite{Bie} and \cite{Bro}.

The cohomological dimension $\cd(\phi)$ of a group homomorphism $\phi:G\to H$ was defined a decade ago~\cite{Gr}. It was little studied (\cite{Sc},\cite{DK}), 
since the concept of $\cd(\phi)$ is quite difficult to work with. In this paper we managed to extend some of the classic results of cohomological dimension theory
from groups to group homomorphisms. 

For groups together with the notion of cohomological dimension $\cd(G)$ there are important notions of geometric dimension $\gd(G)$ and
homological dimension $\hd(G)$~\cite{Bro}. The classical result of cohomological dimension theory states that $\cd(G)=\gd(G)$ whenever $\cd(G)\ne 2$.
In the case of $\cd(G)=2$ this equality is the subject of the Eilenberg-Ganea conjecture. The proof of the above equality breaks in two parts: $\cd(G)\ge 3$ (the Eilenberg-Ganea theorem) and $\cd(G)=1$ (the Stallings-Swan theorem)~\cite{Bro}. For group homomorphisms there are counterexamples to both theorems. The Stallings-Swan theorem was disproved by T. Goodwillie \cite{Gr},\cite{DK} with a group homomorphism $\phi:G\to H$ with $\cd(\phi)=1$ and $\gd(\phi)=2$ and the
the Eilenberg-Ganea theorem was disproved in~\cite{DK} with $\cd(\phi)=2$ and $\gd(\phi)=3$.
 In this paper we  show that the Eilenberg-Ganea theorem does not hold for group homomorphisms
with $\cd(G)=n$ for any $n\ge 2$.

Generally, $\hd(G)\le\cd(G)$ and there are groups where the inequality is strict~\cite{Bro}. Studying groups $G$ with $\hd(G)=1$ and $\cd(G)=2$ is  a quite exciting activity (see~\cite{KLL}).
We note that for geometrically finite groups $G$, i.e. groups for which there is a finite classifying CW-complex $BG=K(G,1)$, there is the equality $\hd(G)=\cd(G)$.
In this paper we prove this equality for homomorphisms of geometrically finite groups. 
\begin{thm}
For every homomorphism $\phi:G\to H$ of a geometrically finite group $G$ we have $\hd(\phi)= \cd(\phi)$.
\end{thm}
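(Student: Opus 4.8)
The plan is to route everything through $\mathbb{Q}/\mathbb{Z}$-duality and to isolate the single place where geometric finiteness of $G$ is needed. Write $M^\vee=\Hom_\Z(M,\Q/\Z)$ for the Pontryagin dual of an abelian group, equipped with the contragredient action when $M$ is a $G$- or $H$-module, and let $\iota_M\colon M\to M^{\vee\vee}$ be the canonical evaluation map. Two dualities are in play. First, for \emph{any} group $\Gamma$ and any resolution of $\Z$ by free $\Z\Gamma$-modules (of arbitrary rank), the tensor–hom adjunction $(P_i\otimes_{\Z\Gamma}N)^\vee\cong\Hom_{\Z\Gamma}(P_i,N^\vee)$ together with exactness of $(\,\cdot\,)^\vee$ gives a natural isomorphism $H_k(\Gamma;N)^\vee\cong H^k(\Gamma;N^\vee)$; call this (A). Because (A) is natural in $\Gamma$, applying it to $\phi$ yields the key equivalence
\[
\phi^*_{N^\vee}\neq 0 \ \Longleftrightarrow\ \phi_{*,N}\neq 0 \qquad(\dagger)
\]
in each degree, i.e. the cohomology map with coefficients $N^\vee$ is the Pontryagin dual of the homology map with coefficients $N$. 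Taking $N$ realizing $\hd(\phi)$ and $M=N^\vee$ immediately gives the easy inequality $\hd(\phi)\le\cd(\phi)$, and reduces the theorem to showing that every \emph{cohomological} detection can be achieved by a coefficient module of the special form $N^\vee$.

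The second duality uses finiteness. Since $G$ is geometrically finite, $\Z$ admits a finite resolution $P_*\to\Z$ by finitely generated free $\Z G$-modules, and then $\Hom_{\Z G}(P_i,M)^\vee\cong (M^\vee)^{r_i}\cong P_i\otimes_{\Z G}M^\vee$ with the finite ranks $r_i$ making the finite-sum/finite-product identification legitimate. This produces a natural isomorphism $H^k(G;M)^\vee\cong H_k(G;M^\vee)$ for the \emph{source} group $G$; call this (B). I emphasize that (B) is exactly where geometric finiteness of $G$ enters, and that it is needed only for $G$, never for $H$ (where the analogous statement would fail).

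With these in hand the main claim is: if $\phi^*_M\neq 0$ in degree $n$ for some $H$-module $M$, then $\phi^*_{M^{\vee\vee}}\neq 0$ in degree $n$ as well. Consider the naturality square of $\phi^*$ along $\iota_M\colon M\to M^{\vee\vee}$. It suffices to prove that $H^n(G;\iota_M)\colon H^n(G;M)\to H^n(G;M^{\vee\vee})$ is injective, for then the composite $H^n(G;\iota_M)\circ\phi^*_M$ is nonzero, forcing $\phi^*_{M^{\vee\vee}}\neq 0$. Injectivity of $H^n(G;\iota_M)$ is equivalent, by faithful exactness of $(\,\cdot\,)^\vee$, to surjectivity of $H^n(G;\iota_M)^\vee$; and by the natural isomorphism (B) this map is identified with $H_n(G;\iota_M^\vee)$, where $\iota_M^\vee\colon M^{\vee\vee\vee}\to M^\vee$. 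The triangle identity for double dualization, $\iota_M^\vee\circ\iota_{M^\vee}=\id_{M^\vee}$, shows $\iota_M^\vee$ is a split epimorphism, so $H_n(G;\iota_M^\vee)$ is split epic, hence surjective, for \emph{any} additive functor. This proves the claim. Applying $(\dagger)$ with $N=M^\vee$ and $M^{\vee\vee}=N^\vee$ then gives $\phi_{*,M^\vee}\neq 0$ in degree $n$, so $\hd(\phi)\ge\cd(\phi)$, and the two degree sets in fact coincide degreewise, yielding $\hd(\phi)=\cd(\phi)$.

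The routine content is the construction of the two duality isomorphisms at the chain level, and the part I expect to be the genuine obstacle is the \emph{naturality bookkeeping}: verifying that (A) is natural in the group so that the square producing $(\dagger)$ commutes for the homomorphism $\phi$ (keeping track of restriction along $\phi$ and the comparison chain map $P_*\to Q_*$), and verifying that (B) is natural in the coefficient module so that it may be applied to $\iota_M^\vee$. Once these compatibilities are in place, the proof is the short categorical argument above, with finiteness of $G$ used exactly once, in establishing (B).
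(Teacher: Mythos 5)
Your proposal is correct, and it takes a genuinely different route from the paper. The paper first reduces to the case of an epimorphism, then uses Davis' trick to realize $\phi$ by a map $g:M\to BH$ from a closed orientable aspherical manifold retracting onto $BG$, and proves $\cd(g)\le\hd(g)$ by capping the fundamental class with $g^*\beta_H^k$ (the pulled-back power of the Berstein--Schwarz class) and invoking a detection lemma that produces a cohomology class $\gamma$ with $g_*([M]\cap\gamma)\ne 0$; the reverse inequality comes from a chain-homotopy argument (the chain map induced by $\phi$ is homotopic to one vanishing above degree $\cd(\phi)$). Your argument replaces all of this with Pontryagin duality $(-)^\vee=\Hom_{\Z}(-,\Q/\Z)$: the unrestricted duality (A) gives $\hd(\phi)\le\cd(\phi)$ for arbitrary $\phi$, and the finitely-generated duality (B) for the source group, combined with the split epimorphism $\iota_M^\vee$ and the naturality square along $\iota_M$, gives the converse; the double-dual detour is exactly what lets you avoid imposing any finiteness on $H$, matching the hypotheses of the theorem. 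I checked the key steps: the triangle identity $\iota_M^\vee\circ\iota_{M^\vee}=\id_{M^\vee}$ does hold and is $H$-equivariant for the contragredient actions, $\Q/\Z$ being an injective cogenerator makes $(-)^\vee$ exact and faithful, and (B) only needs the resolution of $\Z$ over $\Z G$ to be finitely generated projective in each relevant degree. What each approach buys: yours is more elementary and strictly more general --- it needs only that $G$ is of type $FP_\infty$ rather than geometrically finite, and it dispenses with manifolds, orientability and Poincar\'e duality entirely --- whereas the paper's manifold machinery (Davis' trick, the detection proposition, and the computation $H^k(\Lambda,B_k(\Lambda))=\Z$) is reused in an essential way for the later results on $\cd_\k$ and the product formula, so the heavier route is not wasted there. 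The naturality verifications you flag as the remaining bookkeeping are indeed routine and standard.
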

It is known that the logarithmic law for the cohomological dimension of the product does not hold even for geometrically finite groups~\cite{Dr1}, though
the formula $\cd(G\times G)=2\cd(G)$  holds~\cite{Dr2}.    In this paper we extend the latter equality to group homomorphisms between geometrically finite groups.
\begin{thm}
For every homomorphism $\phi:G\to H$ between geometrically finite groups  $$\cd(\phi\times\phi)=2\cd(\phi).$$
\end{thm}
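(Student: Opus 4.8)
The plan is to use the first theorem to transfer everything to \emph{homological} dimension, where one has cross products at one's disposal, and then to localize the difficulty in a single torsion phenomenon. Since a finite classifying complex $BG$ gives the finite classifying complex $B(G\times G)=BG\times BG$, the group $G\times G$ is again geometrically finite. Hence the first theorem applies both to $\phi$ and to $\phi\times\phi$, giving $\cd(\phi)=\hd(\phi)$ and $\cd(\phi\times\phi)=\hd(\phi\times\phi)$. It therefore suffices to prove the homological statement $\hd(\phi\times\phi)=2\hd(\phi)$.

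The upper bound $\hd(\phi\times\phi)\le 2\hd(\phi)$ is the routine direction. I would prove the subadditivity $\hd(\phi\times\psi)\le\hd(\phi)+\hd(\psi)$ for arbitrary homomorphisms, by the relative analogue of the classical argument for groups: the map $\phi$ is computed by a chain map of flat $\Z H$-module complexes, obtained by inducing up $C_*(\wt{BG})$ along $\phi$ and comparing with $C_*(\wt{BH})$, and over $\Z[H\times H']=\Z H\otimes_\Z\Z H'$ the map $\phi\times\psi$ is assembled from $\phi$ and $\psi$ in complementary degrees through a Künneth-type spectral sequence. All contributions in total degree exceeding $\hd(\phi)+\hd(\psi)$ vanish, which gives the bound; specializing to $\psi=\phi$ yields $\hd(\phi\times\phi)\le 2\hd(\phi)$.

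For the lower bound, set $n=\hd(\phi)$ and choose an $H$-module $A$ together with a class $x\in H_n(G;A)$ with $\phi_*x\ne 0$ in $H_n(H;A)$. Naturality of the homology cross product produces the commuting square
\[
\begin{CD}
H_n(G;A)\otimes H_n(G;A) @>{\times}>> H_{2n}(G\times G;A\otimes A)\\
@V{\phi_*\otimes\phi_*}VV @VV{(\phi\times\phi)_*}V\\
H_n(H;A)\otimes H_n(H;A) @>{\times}>> H_{2n}(H\times H;A\otimes A),
\end{CD}
\]
so it suffices to arrange $\phi_*x\times\phi_*x\ne 0$: then $(\phi\times\phi)_*(x\times x)=\phi_*x\times\phi_*x\ne 0$, whence $\hd(\phi\times\phi)\ge 2n$.

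The main obstacle is precisely the non-vanishing of $\phi_*x\times\phi_*x$, which can fail over $\Z$ for torsion reasons: the element $\phi_*x\otimes\phi_*x$ may be zero, and a detecting homomorphism $A\to\Q/\Z$ is of no use since $\Q/\Z\otimes\Q/\Z=0$. To circumvent this I would invoke Bockstein theory to realize $\hd(\phi)$ in the \emph{same} top degree $n$ with coefficients in a field $\mathbb F\in\{\Q\}\cup\{\Z/p : p \text{ prime}\}$. This is the technical heart of the argument and the step most in need of care, since one must rule out a drop in the witnessing degree upon reducing the coefficients; here the finiteness of $BG$ is what keeps the relevant (co)homology finitely generated and the Bockstein bookkeeping under control. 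Once the coefficients lie in a field, the Künneth theorem has no $\operatorname{Tor}$ term and the tensor square of a nonzero vector is again nonzero, so $\phi_*x\times\phi_*x\ne 0$ is automatic. Combined with the upper bound this gives $\hd(\phi\times\phi)=2\hd(\phi)$, and hence $\cd(\phi\times\phi)=2\cd(\phi)$.
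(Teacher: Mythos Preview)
Your overall architecture matches the paper's: both arguments hinge on producing a \emph{field} $\k$ with $\cd_\k(\phi)=\cd(\phi)$ (equivalently $\hd_\k(\phi)=\hd(\phi)$), after which K\"unneth over $\k$ has no Tor term and the square of a nonzero class is nonzero. The upper bound is routine either way (the paper does it via Theorem~\ref{chain} by replacing the induced chain map with one vanishing above degree $n$ and tensoring).

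The gap is in your field-reduction step. You write that ``Bockstein theory'' together with finiteness of $BG$ will realize the top homological degree over some $\mathbb F\in\{\Q,\Z/p\}$, but this is exactly the nontrivial content, and the hint you give does not suffice. The coefficient module $A$ is an arbitrary $H$-module: even with $BG$ and $BH$ finite, $H_n(H;A)$ need not be finitely generated, so the usual Bockstein bookkeeping (which is designed for integral coefficients on a finite complex) does not apply directly. When $\phi_*x$ has infinite order the passage to $A\otimes\Q$ does work by flatness of $\Q$; the real difficulty is the torsion case, where neither $A\otimes\Z/p$ nor $A[p]$ is guaranteed to detect $\phi_*x$, and no reduction to a finitely generated or torsion-free $A$ is offered.

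The paper handles precisely this point, and it is the heart of the argument (Theorem~\ref{k-exist}). Using Davis' trick it replaces $\phi$ by a map between closed orientable aspherical manifolds, and then via Proposition~\ref{refine} it replaces the arbitrary module by the specific boundary module $B_k(\pi)$, for which Proposition~\ref{Z} (using Poincar\'e duality and the structure of $\wt M$) gives $H^k(\pi,B_k(\pi))\cong\Z$. Only then is the image of $\phi^*$ a cyclic group, and only then does the dichotomy ``tensor with $\Q$ or with $\Z_p$'' go through via the Universal Coefficient Formula. Your detour through homology is harmless (Theorem~\ref{hd} makes it equivalent), but it does not bypass this work: you still need Theorem~\ref{k-exist} or an honest substitute for it.
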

Among our main tools in proving the above theorems are Davis' trick with aspherical manifolds and a characterization of cohomological dimension of a group homomorphism in terms of projective resolutions which we give in this paper.

\section{Preliminaries}

\subsection{Cohomology with local coefficients}
Let~$\pi=\pi_1(X)$, and let~$\mathbb Z\pi$ be the group ring of~$\pi$. 
We will refer to $\mathbb Z\pi$-module as to $\pi$-module. There is a bijection
between local coefficients systems (locally trivial sheaves) $\A$ on $X$ and $\pi$-modules
where $\A$ corresponds to the stalk $A=\A_x$.
 A map~$f: X \to Y$ and a local coefficient system~$\A$ on~$Y$,
define a local coefficient system on~$X$, denoted~$f^*\A$. Given a pair of coefficient systems~$\A$ and~$\B$ on $X$, the
tensor product~$\A\otimes \B$ is defined by setting~$(\A\otimes
\B)_x=\A_x\otimes\B_x$. On the $\pi$-module level it s the tensor product of abelian groups $A\otimes B$ with the diagonal action of $\pi$.

We recall the definition of the (co)homology groups with local
coefficients via $\pi$-modules \cite{Ha}:

$$H^k(X;\A)\cong H^k(\Hom_{\mathbb Z\pi}(C_*(\wt X), A), \delta)$$

and
$$
H_k(X;\A)\cong H_k(A\otimes _{\mathbb Z\pi}C_*(\wt X), 1\otimes \pa)
$$
where~$(C_*(\wt X), \pa)$ is the chain complex of the universal
cover~$\wt X$ of~$X$ and ~$A$ is the stalk of the local coefficient
system~$\A$, and~$\delta$ is the coboundary operator.  
When $\A$ is a trivial coefficient system the above definition coincides with the usual definition in terms of the chain complex $C_*(X)$ of $X$. Moreover,  there is the following 
intermediate statement.
\begin{prop}\label{cover}
Let $p:X'\to X$ be a normal covering and $\A$ local coefficient system on $X$ such that $p^*\A$ is a trivial system. Then cohomology and homology
groups of $X$ with coefficients in $\A$ can be defined by means of the chain complex $(C_*(X'),\partial)$ as 
$$H^k(X;\A)\cong H^k(\Hom_{\mathbb Z\pi'}(C_*(X'), A), \delta)$$
and
$$
H_k(X;\A)\cong H_k(A\otimes _{\mathbb Z\pi'}C_*(X'), 1\otimes \pa)
$$
where $\pi'=\pi_1(X)/\pi_1(X')$ is the structure group of $p$.
\end{prop}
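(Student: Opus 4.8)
The plan is to reduce both formulas to the standard definition recalled above by a change-of-rings argument. Write $\pi=\pi_1(X)$ and let $N=\pi_1(X')$, regarded as a subgroup of $\pi$; since $p$ is normal, $N\trianglelefteq\pi$ and $\pi'=\pi/N$ is the deck group. The universal cover $\wt X$ serves simultaneously as the universal cover of $X$ (with deck group $\pi$) and of $X'$ (with deck group $N$), while $\pi'$ acts on $X'$ by deck transformations of $p$. First I would translate the hypothesis: $p^*\A$ is trivial exactly when $N$ acts trivially on the stalk $A$, so that $A$ is in fact a $\mathbb Z\pi'$-module whose restriction along the quotient map $\rho\colon\mathbb Z\pi\to\mathbb Z\pi'$ recovers the original $\pi$-module structure. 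Denote this restriction by $\rho^*A$, so that $\rho^*A=A$ as $\pi$-modules.

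The crux is the chain-level identification
$$C_*(X')\cong \mathbb Z\pi'\otimes_{\mathbb Z\pi}C_*(\wt X)$$
as complexes of $\mathbb Z\pi'$-modules. Working with cellular chains, $C_n(\wt X)$ is free over $\mathbb Z\pi$ on the cells of $X$; each such cell lifts to a single free $\pi'$-orbit of cells in $X'$, so $C_n(X')$ is free over $\mathbb Z\pi'$ on the same index set, and the map induced by $\wt X\to X'$ carries the boundary coefficients to their images under $\rho$. This makes the natural map $\mathbb Z\pi'\otimes_{\mathbb Z\pi}C_*(\wt X)\to C_*(X')$ an isomorphism of complexes of $\mathbb Z\pi'$-modules.

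With this in hand the two formulas are formal. For cohomology, the adjunction between extension and restriction of scalars gives
$$\Hom_{\mathbb Z\pi'}(C_*(X'),A)\cong \Hom_{\mathbb Z\pi'}(\mathbb Z\pi'\otimes_{\mathbb Z\pi}C_*(\wt X),A)\cong \Hom_{\mathbb Z\pi}(C_*(\wt X),\rho^*A),$$
and $\rho^*A=A$ by the triviality hypothesis; this isomorphism commutes with the coboundary $\delta$, so passing to cohomology and invoking the definition $H^k(X;\A)\cong H^k(\Hom_{\mathbb Z\pi}(C_*(\wt X),A))$ yields the first claim. For homology I would use associativity of the tensor product,
$$A\otimes_{\mathbb Z\pi'}\bigl(\mathbb Z\pi'\otimes_{\mathbb Z\pi}C_*(\wt X)\bigr)\cong \bigl(A\otimes_{\mathbb Z\pi'}\mathbb Z\pi'\bigr)\otimes_{\mathbb Z\pi}C_*(\wt X)\cong A\otimes_{\mathbb Z\pi}C_*(\wt X),$$
again compatibly with the differential $1\otimes\partial$, and then compare with the definition of $H_k(X;\A)$.

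The routine steps are the two adjunction and associativity isomorphisms. I expect the main obstacle to be the bookkeeping in the identification $C_*(X')\cong \mathbb Z\pi'\otimes_{\mathbb Z\pi}C_*(\wt X)$: keeping straight the three simultaneous actions (of $\pi$, of $N$, and of $\pi'$) on $\wt X$, and verifying that the deck action of $\pi'$ on $C_*(X')$ matches the left $\mathbb Z\pi'$-action on the base change. Once this is pinned down, the triviality of $p^*\A$ is exactly what is needed to pass $A$ through $\rho$ without loss.
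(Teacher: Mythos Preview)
Your argument is correct and is essentially the same as the paper's: the paper's proof consists solely of asserting the isomorphisms of (co)chain complexes
\[
\Hom_{\mathbb Z\pi}(C_*(\wt X),A)\cong \Hom_{\mathbb Z\pi'}(C_*(X'),A),
\qquad
A\otimes_{\mathbb Z\pi}C_*(\wt X)\cong A\otimes_{\mathbb Z\pi'}C_*(X'),
\]
and your change-of-rings identification $C_*(X')\cong \mathbb Z\pi'\otimes_{\mathbb Z\pi}C_*(\wt X)$ together with the extension--restriction adjunction and tensor associativity is exactly how one justifies them. There is no substantive difference in approach; you have simply supplied the details the paper omits.
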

Proposition~\ref{cover} follows from isomorphisms of (co)chain complexes $$(\Hom_{\mathbb Z\pi}(C_*(\wt X), A), \delta)\cong(\Hom_{\mathbb Z\pi'}(C_*(X'), A), \delta)$$
and $$(A\otimes _{\mathbb Z\pi}C_*(\wt X), 1\otimes \pa)\cong(A\otimes _{\mathbb Z\pi'}C_*(X'), 1\otimes \pa).$$

We refer to \cite{Bre} for the definition of the cup
product
\[
\cup: H^i(X;\A) \otimes H^j(X;\B) \to H^{i+j}(X; \A\otimes \B)
\]
and the cap product
\[
\cap: H_i(X;\A) \otimes H^j(X;\B) \to H_{i-j}(X; \A\otimes \B).
\]

\subsection{The Kunneth Formula and UCF}
For abelian groups $A$ and $B$ we use notation $A\ast B$ for the $Tor(A,B)$.
\begin{thm}[Kunneth~\cite{Bre}] 
Let $\A$ and $\B$ be local coefficient systems on a finite complexes $X$ and $Y$ with $\A_x\ast\B_x=0$. Then there is a natural short exact sequence
$$
0\to\bigoplus_{p}H^p(X;\A)\otimes H^{r-p}(Y;\B)\to H^r(X\times Y;\A\hat\otimes\B)\to \bigoplus_{p}H^{p+1}(X;\A)\ast H^{r-p}(Y;\B)\to 0.
$$
\end{thm}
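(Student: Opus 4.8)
The plan is to reduce the statement to a purely algebraic Künneth theorem for the tensor product of two cochain complexes of abelian groups, and then to exploit that $\Z$ is hereditary. First I would set up the cochain model. Writing $\pi_X=\pi_1(X)$ and $\pi_Y=\pi_1(Y)$, the universal cover of $X\times Y$ is $\wt X\times\wt Y$ with fundamental group $\pi_X\times\pi_Y$, and there is a canonical identification of chain complexes $C_*(\wt{X\times Y})\cong C_*(\wt X)\otimes C_*(\wt Y)$ over $\Z[\pi_X\times\pi_Y]=\Z\pi_X\otimes\Z\pi_Y$. The external system $\A\hat\otimes\B$ has stalk $\A_x\otimes\B_y$ with the product action, so by the definition recalled above its cochain complex is $\Hom_{\Z[\pi_X\times\pi_Y]}(C_*(\wt X)\otimes C_*(\wt Y),\,A\otimes B)$.

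Because $X$ and $Y$ are finite, each $C_p(\wt X)$ and $C_q(\wt Y)$ is a finitely generated free module over the respective group ring. The cochain cross product then gives an isomorphism of cochain complexes $\Hom_{\Z\pi_X}(C_*(\wt X),A)\otimes\Hom_{\Z\pi_Y}(C_*(\wt Y),B)\xrightarrow{\ \cong\ }\Hom_{\Z[\pi_X\times\pi_Y]}(C_*(\wt X)\otimes C_*(\wt Y),A\otimes B)$, checked on rank-one free summands (where both sides equal $A\otimes B$) and extended by finite additivity, and compatible with the differentials up to the usual sign. Thus $C^*(X\times Y;\A\hat\otimes\B)$ is the tensor product of the cochain complexes $M^\bullet=C^*(X;\A)$ and $N^\bullet=C^*(Y;\B)$, and it remains to prove the algebraic Künneth sequence for $M^\bullet\otimes N^\bullet$. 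Reindexing cohomologically by $M_{-p}=M^p$ turns the standard homology Künneth sequence into the asserted one: the homological $\ast$-term $\mathrm{Tor}(H_i,H_j)$ with $i+j=n-1$ becomes exactly $H^{p+1}(X;\A)\ast H^{r-p}(Y;\B)$, of total cohomological degree $r+1$.

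The main obstacle is that the algebraic Künneth theorem in its clean two-term form presupposes that one of the complexes is termwise torsion-free (flat) over $\Z$, whereas here $M^p\cong A^{n_p}$ and $N^q\cong B^{m_q}$ need not be torsion-free. This is precisely where the hypothesis $\A_x\ast\B_x=A\ast B=0$ enters. The key point is that, since $\Z$ is hereditary, the vanishing of $A\ast B$ propagates to every submodule: for $S\subseteq A^{n}$ the long exact $\ast$-sequence of $0\to S\to A^{n}\to A^{n}/S\to 0$, together with $\mathrm{Tor}_2^{\Z}=0$, yields an injection $S\ast B\hookrightarrow A^{n}\ast B=0$, and symmetrically on the $B$-side. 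Applying this to the cycle and boundary subgroups shows that every term among $Z^p(M),B^p(M)$ is $\ast$-orthogonal to every term among $Z^q(N),B^q(N)$, and in particular $M^p\ast N^q=0$ for all $p,q$.

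With this in hand I would run the usual proof of the Künneth theorem. Tensoring the short exact sequences $0\to Z^p(M)\to M^p\to B^{p+1}(M)\to 0$ with $N^\bullet$ stays short exact in every bidegree because the relevant $\ast$-terms vanish, producing a short exact sequence of total complexes; its long exact cohomology sequence degenerates by the propagated vanishing, leaving only the connecting map, which the resolutions $0\to B^p\to Z^p\to H^p\to 0$ identify with $\ast$ of the cohomology groups. Equivalently one may invoke the hyper-$\mathrm{Tor}$ (Künneth) spectral sequence $\mathrm{Tor}_s(H^*(M),H^*(N))\Rightarrow H^*(M\otimes N)$, which over the hereditary ring $\Z$ has only the columns $s=0,1$ and hence collapses to the desired short exact sequence, the termwise vanishing $M^p\ast N^q=0$ ensuring that the underived tensor product already computes the hyper-$\mathrm{Tor}$ so that no higher correction survives. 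Naturality of the sequence is automatic, since the cochain cross product and every map used above is natural in $\A$ and $\B$.
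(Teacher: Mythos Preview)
The paper does not give a proof of this statement: it is quoted in the preliminaries with the attribution \emph{K\"unneth}~\cite{Bre} and used as a black box, so there is nothing to compare your argument against on the paper's side. What you have written is a correct outline of a direct cellular proof.

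A few remarks on your sketch. The reduction to the algebraic problem is clean: finiteness of $X$ and $Y$ makes each $C_p(\wt X)$ finitely generated free over $\Z\pi_X$, so the cross product
\[
\Hom_{\Z\pi_X}(C_*(\wt X),A)\otimes\Hom_{\Z\pi_Y}(C_*(\wt Y),B)\longrightarrow
\Hom_{\Z[\pi_X\times\pi_Y]}(C_*(\wt X)\otimes C_*(\wt Y),A\otimes B)
\]
is indeed an isomorphism, and the cohomological reindexing of the Tor term to total degree $r+1$ is right. Your hereditary observation is also correct and is the crux: from $A\ast B=0$ and $\mathrm{Tor}^{\Z}_2=0$ one gets $S\ast T=0$ for every $S\subset A^n$ and $T\subset B^m$, hence in particular $M^p\ast N^q=0$ and $Z^p(M),B^p(M)$ are $\ast$-orthogonal to $Z^q(N),B^q(N)$. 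The one place to be a bit more careful is the sentence ``its long exact cohomology sequence degenerates by the propagated vanishing'': to push the direct argument through you still have to compute $H^*(Z^p(M)\otimes N^\bullet)$ and $H^*(B^p(M)\otimes N^\bullet)$, and these are not simply $Z^p(M)\otimes H^*(N)$ since $Z^p(M)$ need not be flat. The clean fix is exactly your alternative: use $M^p\ast N^q=0$ (plus hereditarity, so that all cycle subgroups of an acyclic cone inherit the same Tor-vanishing) to see that replacing $M^\bullet$ by a quasi-isomorphic bounded-above complex of free abelian groups does not change $H^*(M\otimes N)$; then the standard K\"unneth theorem for a free complex, or equivalently the two-column K\"unneth spectral sequence for $M\otimes^{L}N$, yields the short exact sequence. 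With that adjustment the argument is complete and naturality is immediate.
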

When $Y=pt$ the Kunneth theorem turns into the Universal Coefficient Formula:
\begin{thm}[UCF~\cite{Bre}]
Let $\A$  be a local coefficient system on a finite complexes $X$ and let $B$ be an abelian group with $A\ast B=0$. Then there is a natural short exact sequence
$$
0\to H^n(X;\A)\otimes B\to H^n(X;\A\otimes B)\to H^{n+1}(X;\A)\ast B\to 0.
$$
\end{thm}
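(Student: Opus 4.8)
The plan is to reduce the statement to the algebraic universal coefficient theorem applied to the cochain complex $C^*=\Hom_{\Z\pi}(C_*(\wt X),A)$ that computes $H^*(X;\A)$, using the Tor-vanishing hypothesis $A\ast B=0$ to compensate for the fact that $C^*$ is \emph{not} a complex of free abelian groups (so the textbook form of the UCT does not apply verbatim).

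First I would exploit the finiteness of $X$. Since $X$ is a finite complex, each chain group $C_n(\wt X)$ is a finitely generated free $\Z\pi$-module, say $C_n(\wt X)\cong (\Z\pi)^{k_n}$, whence $C^n=\Hom_{\Z\pi}(C_n(\wt X),A)\cong A^{k_n}$. Because a finite direct sum commutes with $-\otimes B$, this yields an isomorphism of cochain complexes
$$\Hom_{\Z\pi}(C_*(\wt X),A\otimes B)\cong \Hom_{\Z\pi}(C_*(\wt X),A)\otimes B=C^*\otimes B,$$
compatible with the differentials, so that $H^n(X;\A\otimes B)\cong H^n(C^*\otimes B)$. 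It then remains to compute $H^n(C^*\otimes B)$ in terms of $H^*(C^*)=H^*(X;\A)$.

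Next I would fix a free resolution $0\to F_1\xrightarrow{\iota}F_0\to B\to 0$ of $B$ over $\Z$ and form the sequence of cochain complexes
$$0\to C^*\otimes F_1\to C^*\otimes F_0\to C^*\otimes B\to 0.$$
Right exactness of the tensor product makes this exact on the right, and the only point requiring argument is injectivity of $C^n\otimes F_1\to C^n\otimes F_0$ in each degree. This is exactly where the hypothesis enters: tensoring the resolution with $C^n\cong A^{k_n}$ identifies the kernel of that map with $\operatorname{Tor}(C^n,B)=\operatorname{Tor}(A,B)^{k_n}=(A\ast B)^{k_n}=0$. Hence the displayed sequence is a genuine short exact sequence of cochain complexes.

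Finally I would pass to the associated long exact cohomology sequence. Since $F_0$ and $F_1$ are free, hence flat, tensoring with them commutes with cohomology, giving $H^n(C^*\otimes F_i)\cong H^n(C^*)\otimes F_i$ with the induced maps becoming $1\otimes\iota$. Splitting the long exact sequence into short pieces identifies the cokernel of $H^n(C^*)\otimes F_1\to H^n(C^*)\otimes F_0$ with $H^n(C^*)\otimes B$ and the kernel of $H^{n+1}(C^*)\otimes F_1\to H^{n+1}(C^*)\otimes F_0$ with $\operatorname{Tor}(H^{n+1}(C^*),B)=H^{n+1}(X;\A)\ast B$, assembling into the asserted sequence; naturality is inherited from the naturality of each construction. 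The hard part is the verification in the previous paragraph that $A\ast B=0$ together with the finiteness of $X$ keeps the coefficient sequence short exact at the cochain level — once that is secured, the rest is the standard homological bookkeeping of the universal coefficient theorem.
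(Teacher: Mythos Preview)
The paper does not actually prove this statement: it is quoted as a preliminary result from Bredon's \emph{Sheaf Theory}, and the only remark the paper makes is that it arises as the special case $Y=\pt$ of the K\"unneth theorem (also cited without proof). So there is no ``paper's own proof'' to compare against beyond that one-line reduction.

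That said, your direct argument is correct and self-contained. The key observation --- that finiteness of $X$ makes each $C^n\cong A^{k_n}$, so the hypothesis $A\ast B=0$ forces $\operatorname{Tor}(C^n,B)=0$ and hence the tensored resolution of $B$ stays short exact at the cochain level --- is exactly what is needed to make the standard UCT machinery go through despite $C^*$ not being a complex of free abelian groups. Compared with the paper's route (invoke K\"unneth from \cite{Bre} and set $Y=\pt$), your approach is more elementary in that it avoids sheaf-theoretic K\"unneth entirely and works purely with the cellular cochain model; the trade-off is that you must explicitly verify the Tor-vanishing step, which the K\"unneth citation absorbs into its hypotheses.
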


\subsection{Cohomological dimension of groups}
By definition the cohomology of a discrete group $\pi$ with coefficients in a $\pi$-module $A$ is the cohomology of its classifying space,
$H^*(\pi,A)=H^*(B\pi;\A)$ where $\A$ is the corresponding local system. Also we will be using the notation $H^*(B\pi;A)$ for $H^*(\pi,A)$.
It is customary to separate coefficients for group (co)homology by a coma and for the space cohomology by a semicolon. 

The cohomological dimension of a group $\pi$ is defined as follows
$$
\cd(\pi)=\max\{n\mid H^n(\pi,A)\ne 0\ \ \text{for some}\ \ \mathbb Z\pi\text{-module}\  A\}.$$
Similarly the homological dimension is defined as
$$
\hd(\pi)=\max\{n\mid H_n(\pi,A)\ne 0\ \ \text{for some}\ \ \mathbb Z\pi\text{-module}\  A\}.$$
The geometric dimension $\gd(\pi)$ is defined as the minimal dimension of CW-complexes serving as a classifying space $B\pi$ for $\pi$
$$\gd(\pi)=\min\{n\mid \exists\ B\pi  \ \ \text{with}\ \ \dim B\pi=n\}.$$
Consider the short exact sequence $0\to \I_\pi\to\mathbb Z\pi\stackrel{\epsilon}\to\mathbb Z\to 0$ defined by the augmentation map $\epsilon$.
The {\em Berstein-Schwarz class} $\beta_{\pi}\in H^1(\pi,\I_\pi)$ of a discrete group $\pi$ is the image $\delta(1)$ of $$1\in\mathbb Z=H^0(B\pi;\mathbb Z)\stackrel{\delta}\to H^1(B\pi;\I_\pi)$$
under the connecting homomorphism in the coefficients long exact sequence.
Here $\I_\pi$ is the augmentation ideal of the group ring $\Z\pi$~\cite{Be},\cite{Sch}. 
The Berstein-Schwarz class is universal in the following sense.
\begin{thm}[Universality~\cite{DR},\cite{Sch}]\label{universal}
For any cohomology class $\alpha\in H^k(\pi,L)$ there is a homomorphism of $\pi$-modules $\I_\pi^k\to L$ such that the induced homomorphism for cohomology takes $\beta_{\pi}^k\in H^k(\pi;\I_\pi^k)$ to $\alpha$  where $\I_\pi^k=\I_\pi\otimes\dots\otimes \I_\pi$ and $\beta_{\pi}^k=\beta_{\pi}\cup\dots\cup\beta_{\pi}$.
\end{thm}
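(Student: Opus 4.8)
The plan is to realize $\beta_\pi^k$ as the canonical ``syzygy cocycle'' of a specific free resolution of $\Z$ built from tensor powers of the augmentation ideal, and then to obtain $f$ by a direct factorization of an arbitrary cocycle representing $\alpha$. First I would construct the resolution. Tensoring the augmentation sequence $0\to\I_\pi\to\Z\pi\to\Z\to 0$ over $\Z$ with the diagonal $\pi$-module $\I_\pi^{j}$ yields short exact sequences
$$0\to\I_\pi^{j+1}\to\Z\pi\otimes\I_\pi^{j}\xrightarrow{p_j}\I_\pi^{j}\to 0,$$
which stay exact because $\I_\pi$, and hence each $\I_\pi^{j}$, is free abelian (so tensoring over $\Z$ preserves exactness). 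The middle term $F_j:=\Z\pi\otimes\I_\pi^{j}$ is a free $\Z\pi$-module by the untwisting isomorphism $g\otimes m\mapsto g\otimes g^{-1}m$, since $\I_\pi^{j}$ is free abelian. Splicing these sequences along $F_{j}\xrightarrow{p_j}\I_\pi^{j}\hookrightarrow F_{j-1}$ produces a free resolution
$$\cdots\to F_j\xrightarrow{d_j}F_{j-1}\to\cdots\to F_0=\Z\pi\to\Z\to 0$$
whose $k$-th syzygy $\Im d_k=\Ker d_{k-1}$ is exactly $\I_\pi^{k}$, and with $H^k(\pi,L)\cong H^k(\Hom_{\Z\pi}(F_\bullet,L))$.

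Next observe that the surjection $p_k\colon F_k\to\I_\pi^k$ is a $\Z\pi$-homomorphism, i.e.\ a $k$-cochain; since $\Ker p_k=\Ker d_k=\Im d_{k+1}$, it satisfies $p_k\circ d_{k+1}=0$ and is therefore a cocycle. Granting for the moment the identification $[p_k]=\beta_\pi^k$, the theorem follows quickly. Represent $\alpha\in H^k(\pi,L)$ by a cocycle $c\colon F_k\to L$, so that $c\circ d_{k+1}=0$, meaning $c$ vanishes on $\Im d_{k+1}=\Ker p_k$. As $p_k$ is surjective, $c$ factors as $c=f\circ p_k$ for a unique $\pi$-homomorphism $f\colon\I_\pi^k\to L$, and then $f_*(\beta_\pi^k)=f_*[p_k]=[f\circ p_k]=[c]=\alpha$, which is exactly the assertion.

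The heart of the argument, and the step I expect to be the main obstacle, is the identification $[p_k]=\beta_\pi^k$. I would prove it by induction on $k$, using the comparison between cup products and connecting homomorphisms: for the degree-one class $\beta_\pi$, which is the extension class in $\Ext^1_{\Z\pi}(\Z,\I_\pi)\cong H^1(\pi,\I_\pi)$ of the augmentation sequence, cupping with $\beta_\pi$ coincides (up to sign) with the connecting homomorphism $\delta_k\colon H^{k-1}(\pi,\I_\pi^{k-1})\to H^k(\pi,\I_\pi^{k})$ of the sequence obtained by tensoring that extension with $\I_\pi^{k-1}$, which is precisely the $k$-th splicing sequence above. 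Hence $\beta_\pi^{k}=\beta_\pi^{k-1}\cup\beta_\pi=\delta_k(\beta_\pi^{k-1})$, and starting from $\beta_\pi^0=1\in H^0(\pi,\Z)$ the iterated connecting homomorphisms $\delta_k\cdots\delta_1(1)$ build up $\beta_\pi^k$. On the other hand, for any free resolution these same iterated connecting homomorphisms applied to $1$ return the class of the canonical surjection onto the $k$-th syzygy, namely $[p_k]$.

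Verifying the cup-product/connecting-homomorphism comparison directly from the chosen definition of the cup product is the technical crux, since one must track the diagonal action on the tensor powers $\I_\pi^{j}$ and match it with the splicing maps $d_j$; once this is in place, the induction of the previous paragraph and the elementary factorization $c=f\circ p_k$ complete the proof. A convenient bookkeeping device is to verify everything at the chain level on the resolution $F_\bullet$, where both $\beta_\pi^k$ and $[p_k]$ are represented by honest $\Z\pi$-homomorphisms $F_k\to\I_\pi^k$, reducing the claim to the statement that the composite of the splicing projections is homotopic to $p_k$.
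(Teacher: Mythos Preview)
The paper does not supply its own proof of this theorem; it is quoted from the references \cite{DR} and \cite{Sch} and used as a black box. Your outline---constructing the free resolution with $F_j=\Z\pi\otimes\I_\pi^{j}$ by splicing the tensored augmentation sequences, identifying the $k$-th syzygy with $\I_\pi^{k}$, recognizing the class of the canonical surjection $p_k$ as $\beta_\pi^{k}$ via the iterated connecting homomorphisms (equivalently, via the cup-product description of the Yoneda splice), and then factoring an arbitrary cocycle $c\colon F_k\to L$ through $p_k$---is precisely the standard argument given in those references, and it is correct.
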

The Universality Theorem implies 
\begin{cor}\label{cd}
$$\cd(\pi)=\max\{k\mid\beta_\pi^k\ne 0\}.$$
\end{cor}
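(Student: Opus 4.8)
The plan is to prove the two inequalities separately, writing $m=\max\{k\mid\beta_\pi^k\ne 0\}$, and to let the Universality Theorem (\theoref{universal}) carry all the real weight; the argument is then essentially an unwinding of the definition of $\cd(\pi)$. Note first that $m$ is well defined: $\beta_\pi^0=1\in H^0(\pi,\mathbb Z)$ is nonzero, so the set is nonempty (and if it is unbounded then $\cd(\pi)=\infty$ as well, and both sides agree).

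For the lower bound $\cd(\pi)\ge m$ I would argue directly from the definition. By construction $\beta_\pi^m\in H^m(\pi,\I_\pi^m)$ is a nonzero cohomology class with coefficients in the $\pi$-module $\I_\pi^m=\I_\pi\otimes\dots\otimes\I_\pi$. Hence $H^m(\pi,\I_\pi^m)\ne 0$, and since $\cd(\pi)$ is by definition the largest degree in which some $\pi$-module yields nonvanishing cohomology, this already forces $\cd(\pi)\ge m$.

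For the upper bound $\cd(\pi)\le m$ I would show that $\beta_\pi^n\ne 0$ whenever $H^n(\pi,L)\ne 0$ for some $\pi$-module $L$. Fix such an $n$ and $L$, and choose a nonzero class $\alpha\in H^n(\pi,L)$. By \theoref{universal} there is a homomorphism of $\pi$-modules $\I_\pi^n\to L$ whose induced map on cohomology sends $\beta_\pi^n\in H^n(\pi,\I_\pi^n)$ to $\alpha$. Since $H^n(\pi,-)$ is covariant in the coefficient module, a homomorphism whose induced map hits the nonzero element $\alpha$ cannot have $\beta_\pi^n$ in its kernel; thus $\beta_\pi^n\ne 0$, and therefore $n\le m$. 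Taking the supremum over all such $n$ gives $\cd(\pi)\le m$, and combined with the lower bound this yields the claimed equality.

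I do not expect a genuine obstacle here, since all the substance is packaged in the Universality Theorem, which is assumed. The only points requiring minor care are functoriality of $H^*(\pi,-)$ in the coefficient module and the observation that \theoref{universal} supplies precisely the coefficient homomorphism realizing an arbitrary $\alpha$ as the image of a power of the Berstein-Schwarz class; everything else is bookkeeping with the definition of cohomological dimension.
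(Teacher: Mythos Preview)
Your argument is correct and is exactly the intended one: the paper gives no explicit proof, merely recording the corollary as an immediate consequence of the Universality Theorem, and your two-inequality unwinding is precisely how that implication goes. There is nothing to add.
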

Everywhere above the background ring $\mathbb Z$ can be replaced by any commutative ring with unit $\k$ and the dimensions $\cd_\k(\pi)$ and $\hd_\k(\pi)$
can be defined accordingly.
Thus, in our notations $\cd(\pi)=\cd_{\mathbb Z}(\pi)$. In this paper we use along with $\mathbb Z$ the fields $\k=\mathbb Z_p$ and $\k=\mathbb Q$.
\begin{prop}\label{integers-group}
For all $\k$ and $\pi$,
$$ \cd_\k(\pi)\le\cd(\pi).$$
\end{prop}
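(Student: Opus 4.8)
The plan is to reduce both dimensions to the projective dimension of the respective trivial module and then to transport a short projective resolution from $\Z\pi$ to $\k\pi$ by extension of scalars along the canonical ring map $\Z\pi\to\k\pi$. We may assume $\cd(\pi)=n<\infty$, since otherwise there is nothing to prove. Recall that $\cd(\pi)$ equals the projective dimension of the trivial $\Z\pi$-module $\Z$, so there is a projective $\Z\pi$-resolution of length $n$,
$$0\to P_n\to P_{n-1}\to\cdots\to P_0\to\Z\to 0.$$
First I would apply the functor $\k\otimes_{\Z}-$ to this resolution. Since $\k\otimes_{\Z}\Z\pi=\k\pi$ and extension of scalars carries free modules to free modules and direct summands to direct summands, each $\k\otimes_{\Z}P_i\cong\k\pi\otimes_{\Z\pi}P_i$ is a projective $\k\pi$-module. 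Thus $\k\otimes_{\Z}P_*$ is a length-$n$ complex of projective $\k\pi$-modules augmented over $\k=\k\otimes_{\Z}\Z$.

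The key point is that this complex is still exact, i.e.\ a genuine resolution. The functor $\k\otimes_{\Z}-$ is not exact in general (take $\k=\Z_p$), so I would argue using flatness of the syzygies. Let $\Omega_i=\Im(P_i\to P_{i-1})$ be the $i$-th syzygy, with $\Omega_0=\Z$, so that the resolution splits into short exact sequences $0\to\Omega_{i+1}\to P_i\to\Omega_i\to 0$. Each $\Omega_i$ is a submodule of $P_{i-1}$, and each $P_j$, being a direct summand of a free $\Z\pi$-module and hence of a free $\Z$-module, is $\Z$-torsion free; therefore every $\Omega_i$ is $\Z$-torsion free and, over the PID $\Z$, $\Z$-flat. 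Consequently $\operatorname{Tor}^{\Z}_1(\k,\Omega_i)=0$ for all $i$, so applying $\k\otimes_{\Z}-$ keeps each short exact sequence exact, and splicing them shows $\k\otimes_{\Z}P_*\to\k$ is exact. This gives a projective $\k\pi$-resolution of the trivial module $\k$ of length $n$, whence $\cd_\k(\pi)=\operatorname{pd}_{\k\pi}(\k)\le n=\cd(\pi)$.

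The main obstacle is precisely the exactness step above: nothing guarantees a priori that reducing coefficients along $\Z\to\k$ preserves a resolution, and the whole argument rests on the observation that the terms and syzygies of a projective resolution of $\Z$ over $\Z\pi$ are automatically $\Z$-torsion free. Alternatively, and perhaps more transparently, one can bypass any discussion of exactness of the reduced complex by comparing cohomology directly: for any $\k\pi$-module $A$, the extension--restriction adjunction furnishes a natural isomorphism of cochain complexes $\Hom_{\k\pi}(\k\pi\otimes_{\Z\pi}P_*,A)\cong\Hom_{\Z\pi}(P_*,A)$, so that $H^k(\pi,A)$ computed over $\k\pi$ agrees with $H^k(\pi,A)$ computed over $\Z\pi$. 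Since any $\k\pi$-module is in particular a $\Z\pi$-module, a nonzero degree-$k$ class with $\k\pi$-coefficients is a nonzero degree-$k$ class with $\Z\pi$-coefficients, forcing $k\le\cd(\pi)$ and yielding the inequality at once.
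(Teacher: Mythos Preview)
Your proposal is correct. In fact the paper's own proof is exactly the ``alternative'' you give in the last paragraph: any $\k\pi$-module is in particular a $\Z\pi$-module, so the set over which the maximum defining $\cd_\k(\pi)$ is taken is a subset of that for $\cd(\pi)$, and the inequality is immediate. The only subtlety one might flag---that $H^*(\pi,A)$ computed over $\k\pi$ coincides with $H^*(\pi,A)$ computed over $\Z\pi$---you handle via the adjunction isomorphism $\Hom_{\k\pi}(\k\pi\otimes_{\Z\pi}P_*,A)\cong\Hom_{\Z\pi}(P_*,A)$; the paper leaves this implicit (indeed, with the classifying-space definition of group cohomology used in the paper it is automatic).

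Your main approach, by contrast, is genuinely different and considerably more elaborate than necessary: you base-change a length-$n$ projective $\Z\pi$-resolution of $\Z$ along $\Z\to\k$ and check that exactness survives because all syzygies, being subgroups of $\Z$-free modules, are $\Z$-flat. This is a valid argument and yields the slightly stronger conclusion that a finite-length projective resolution over $\Z\pi$ produces one of the same length over $\k\pi$ (useful if one cares about explicit resolutions), but for the bare inequality $\cd_\k(\pi)\le\cd(\pi)$ the one-line module-inclusion argument suffices.
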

\begin{proof}
This follows from the fact that any \(\k\pi\) module is canonically a \(\Z\pi\) module. So
\begin{equation*}
\begin{aligned}
    \cd_\k \pi &= \max \{n: H^n(\pi; M) \neq 0 \text{ for some \(\k\pi\)-module \(M\)}\} \\
    & \leq \max \{n: H^n(\pi; M) \neq 0 \text{ for some \(\Z\pi\)-module \(M\)}\} \\
    & = \cd \pi.
\end{aligned}
\end{equation*}
\end{proof}
\subsection{Projective resolutions}
Let $\pi$ be a discrete group. A projective resolution $P_*(\pi)$ of $\mathbb Z$ for the group $\pi$ is an exact sequence of projective $\pi$-modules
$$
\dots\to P_n(\pi)\to P_{n-1}(\pi)\to\dots\to P_2(\pi)\to P_1(\pi)\to\mathbb Z\pi\stackrel{\epsilon}\to\mathbb Z.$$
The homology and cohomology of $\pi$ with coefficients in a $\pi$-module $M$ can be defined as homology of the chain and cochain complexes $P_*(\pi)\otimes_\pi M$
and $Hom_\pi(P_*(\pi),M)$.

The cohomological dimension $\cd(\pi)$ equals the shortest length of projective resolution $P_*(G)$~\cite{Bro}.
The same holds true for any commutative ground ring  with unit $\k$.

\section{Cohomological dimension of group homomorphisms}
Let $\phi:G\to H$ be a group homomorphism, and let \(\k\) be a ring. We define (see~\cite{Gr})
$$
\cd(\phi)=\max\{n\mid 0\ne\phi^*:H^n(H,A)\to H^n(G,A)\ \ \text{for some}\ \ \mathbb ZH\text{-module}\  A\}.$$
Similarly the homological dimension is defined as
$$
\hd(\phi)=\max\{n\mid 0\ne\phi^*:H_n(G,A)\to H_n(H,A)\ \ \text{for some}\ \ \mathbb ZH\text{-module}\  A\}.$$

As before, everywhere the background ring \(Z\) can be replaced with a commutative ring \(\k\) with identity, to get \(\cd_\k(\phi)\) and \(\hd_\k(\phi)\). Note that for the identity homomorphism $1_G:G\to G$ we recover the definition of $\cd(G)$ and $\hd(G)$.

Note that the proof of \ref{integers-group} translates directly to the proof of the following:

\begin{prop} \label{integers}
For any commutative ring \(\k\) with identity, and group homomorphism \(\phi : G \to H\) we have
\begin{equation*}
    \cd_\k(\phi) \leq \cd(\phi).
\end{equation*}
\end{prop}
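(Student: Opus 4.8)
The plan is to reproduce the argument of \propref{integers-group} verbatim, with the group replaced by the homomorphism $\phi$ and the cohomology groups replaced by the induced homomorphisms $\phi^*$. The single underlying fact is once more that every $\k H$-module is canonically a $\Z H$-module: restriction of scalars along the ring homomorphism $\Z H\to\k H$ induced by the unique unital map $\Z\to\k$ turns a $\k H$-module $A$ into a $\Z H$-module without altering its underlying abelian group or its $H$-action, and hence without altering the pulled-back $G$-action via $\phi$.

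Next I would check that, for a fixed coefficient module $A$, the induced homomorphism $\phi^*\colon H^n(H,A)\to H^n(G,A)$ does not depend on whether $A$ is regarded as a $\k H$-module or as a $\Z H$-module. Both cohomology groups, and the map between them, are built from the underlying $\Z H$- and $\Z G$-module structures through the standard (bar) resolution, so forgetting the $\k$-structure changes neither the groups nor $\phi^*$. In particular, $\phi^*$ is nonzero in degree $n$ for $A$ viewed over $\k H$ if and only if it is nonzero in degree $n$ for the same $A$ viewed over $\Z H$.

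With this in hand the inequality is immediate, exactly as in \propref{integers-group}: the class of $\k H$-modules witnessing a nonzero $\phi^*$ in degree $n$ is contained in the class of $\Z H$-modules doing the same, so enlarging the class of admissible coefficients can only increase the maximal such $n$:
\begin{equation*}
\begin{aligned}
\cd_\k(\phi) &= \max\{n\mid \phi^*\ne 0 \text{ on } H^n \text{ for some } \k H\text{-module}\}\\
&\le \max\{n\mid \phi^*\ne 0 \text{ on } H^n \text{ for some } \Z H\text{-module}\}\\
&= \cd(\phi).
\end{aligned}
\end{equation*}

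The only point requiring a moment's care, and hence the closest thing to an obstacle, is the claim of the second paragraph that the induced map is literally the same map in both settings; this is where one must invoke that group cohomology together with its functoriality is defined over $\Z$ and sees only the underlying module structure. Once this is granted the argument is a pure containment-of-classes statement and involves no computation.
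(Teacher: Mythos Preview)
Your proposal is correct and is exactly the approach the paper takes: the paper simply remarks that the proof of \propref{integers-group} translates directly, and you have carried out precisely that translation, with the added (and appropriate) observation that $\phi^*$ on $H^n(H,A)\to H^n(G,A)$ depends only on the underlying $\Z H$-module structure of $A$.
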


Any group homomorphism $\phi:G\to H$ can be realized by a cellular map $\bar\phi:BG\to BH$ as $\phi=\bar\phi_*:\pi_1(BG)\to\pi_1(BH)$.
The geometric dimension $\gd(\phi)$ of $\phi$ is defined as the minimal $n$ such that $\bar\phi$ can be deformed to the $n$-skeleton $BH^{(n)}$:
$$\gd(\phi)=\min\{n\mid \bar\phi\sim f:BG\to BH^{(n)} \}.$$
The Lusternik-Schnirelmann category $\cat(f)$ of a map $f:X\to Y$ is defined as the minimal number $k$ such that $X$ can be covered by $k+1$ open sets $U_0, U_1,\dots,U_k$
with null-homotopic restrictions $f|_{U_i}:U_i\to Y$ for all $i$. This definition can be extended to group homomorphisms $\phi:G\to H$ as $\cat(\bar\phi)$~\cite{Sc}.
The following is well-known (see for example Proposition 4.3 in~\cite{Dr3}).
\begin{prop}
For a group homomorphism $\phi:G\to H$ with its realizing map the $\bar\phi:BG\to BH$ the following are equivalent:

(1) $\gd(\phi)\le n$;

(2) $\cat(\bar\phi)\le n$. 
\end{prop}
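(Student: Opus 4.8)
The plan is to prove the two implications separately, using throughout that $\cat$ is a homotopy invariant of maps, so that it may be computed from any representative of the homotopy class of $\bar\phi$.

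For $(1)\Rightarrow(2)$ I would first replace $\bar\phi$ by a homotopic map $f:BG\to BH^{(n)}\subseteq BH$, which exists by hypothesis. It then suffices to cover the $n$-dimensional complex $K=BH^{(n)}$ by open sets $V_0,\dots,V_n$ whose inclusions into $BH$ are null-homotopic: indeed, the sets $U_i=f^{-1}(V_i)$ cover $BG$ and satisfy $\bar\phi|_{U_i}\simeq f|_{U_i}\simeq *$, giving $\cat(\bar\phi)\le n$. For the cover of $K$ I would use the classical construction in which $V_i$ is a neighbourhood of the open $i$-cells that deformation retracts within $K$ onto the discrete set of their barycentres; since $K$ is connected, each inclusion $V_i\hookrightarrow K\hookrightarrow BH$ is null-homotopic. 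This direction is routine.

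The substance is $(2)\Rightarrow(1)$, and here the asphericity of $BH$ is essential. Starting from a cover $U_0,\dots,U_n$ together with null-homotopies $\Phi_i:U_i\times I\to BH$ of $\bar\phi|_{U_i}$, I would read each $\Phi_i$ as a section over $U_i$ of the pullback $\bar\phi^{*}EH\to BG$ of the universal covering $EH=\widetilde{BH}\to BH$. Choosing a partition of unity subordinate to the cover and forming the fiberwise join, these $n+1$ local sections assemble, by the classical Schwarz construction, into a global section of the pullback of the $(n{+}1)$-fold fiberwise join $J_n\to BH$ of $EH\to BH$, that is, into a lift $\widetilde\phi:BG\to J_n$ of $\bar\phi$.

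The key point, and the step I expect to be the main obstacle, is the identification of $J_n$ with the $n$-skeleton. Because $BH$ is aspherical the loop space $\Omega BH\simeq H$ is homotopy discrete, so the fiber of $J_n\to BH$ is the $(n{+}1)$-fold join $H^{*(n+1)}=E_nH$, which is $(n-1)$-connected and, crucially, only $n$-dimensional; this is exactly where asphericity enters, as the bound on the fiber dimension fails for a general target. Identifying $J_n\cong EH\times_H E_nH\simeq E_nH/H$ and passing, if necessary, to the standard simplicial model, $E_nH/H$ is precisely $BH^{(n)}$, and the projection $J_n\to BH$ corresponds to the skeletal inclusion (up to homotopy, since $H$-maps into the contractible $EH$ are unique up to equivariant homotopy). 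Hence $\widetilde\phi$ deforms $\bar\phi$ into $BH^{(n)}$, giving $\gd(\phi)\le n$. Verifying this collapse of the join construction onto the skeleton, together with the matching of the two projections, is the crux that converts the homotopical lifting datum into genuine skeletal information.
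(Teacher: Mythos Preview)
Your proof is correct. The paper itself does not supply a proof of this proposition; it records the statement as well known and points to Proposition~4.3 of \cite{Dr3}. Your argument is essentially the standard one underlying that reference: the implication $(1)\Rightarrow(2)$ via the categorical open cover of an $n$-complex is routine, and for $(2)\Rightarrow(1)$ the Schwarz--Ganea lifting criterion together with the identification of the $(n{+}1)$-fold fiberwise join of $EH\to BH$ with the associated bundle $EH\times_H H^{*(n+1)}\simeq H^{*(n+1)}/H$, i.e.\ the $n$-skeleton in the Milnor model of $BH$, goes through exactly as you outline. Two small points are worth making explicit. First, the projection $J_n\to BH$ and the skeletal inclusion $BH^{(n)}\hookrightarrow BH$ agree up to homotopy simply because $BH$ is aspherical and both induce the identity on $\pi_1$; this disposes of the ``matching of the two projections'' you flagged as the crux. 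Second, since the conclusion is obtained for the Milnor CW structure, one finishes for an arbitrary CW model of $BH$ by cellular approximation: the factorization of $\bar\phi$ through an $n$-dimensional complex mapping to $BH$ can always be compressed into $BH^{(n)}$.
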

We note that in the case of the identity homomorphism $1_G:G\to G$ our definition of the geometric dimension gives the category $\cat(G)$ which by the Eilenberg-Ganea theorem~\cite{EG}
coincides with $\cd(G)$. The later equals $\gd(G)$ with a potential exception when $\cd(G)=2$.

We recall that any group homomorphism $\varphi : G\to H$ defines a chain map between projective resolutions $\varphi_*:P_*(G)\to P_*(H)$ which induces homomorphism of (co)homology groups~\cite{Bro}.
A chain homotopy between two chain maps $\varphi_*,\psi_*:P_*(G)\to P_*(H)$ is a sequence of homomorphisms $D_*:P_*(G)\to P_{*+1}(H)$ such that
$\partial D_*+D_*\partial=\varphi_*-\psi_*$. Two chain homotopic homomorphisms induce the same homomorphisms for homology and cohomology~\cite{Ha}.
\begin{thm}\label{chain}
Let $\varphi:G\to H$ be a group homomorphism with $\cd(\varphi)=n$ and let $\varphi_*:(P_*(G),\partial_*)\to (P_*(H),\partial_*')$  be the chain map between projective resolutions of $\mathbb Z$
for $G$ and $H$ induced by $\varphi$.
Then $\varphi_*$ is chain homotopic to $\psi_*:P_*(G)\to P_*(H)$ with $\psi_k=0$ for $k>n$.
\end{thm}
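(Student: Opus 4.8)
The plan is to build a chain homotopy $D_*:P_*(G)\to P_{*+1}(H)$ one degree at a time so that the homotopic map $\psi_*=\varphi_*-(\partial' D+D\partial)$ vanishes above degree $n$. Set $D_k=0$ for $k<n$. Since $\psi_k=\varphi_k-\partial'_{k+1}D_k-D_{k-1}\partial_k$, requiring $\psi_k=0$ for $k>n$ amounts to the system $\varphi_k=\partial'_{k+1}D_k+D_{k-1}\partial_k$ for $k\ge n+1$, while $\psi_n$ and the lower $\psi_k$ are left unconstrained, which is all the statement demands. The construction exploits two facts: each $P_k(G)$ is projective, and $P_*(H)$ is a resolution, hence exact in positive degrees, so $\Ker\partial'_k=\Im\partial'_{k+1}$ for $k\ge 1$.

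First I would run the inductive lifting step. Suppose $D_{k-1}$ has been constructed with $\partial'_k D_{k-1}=\varphi_{k-1}-D_{k-2}\partial_{k-1}$ (the relation $\psi_{k-1}=0$) for some $k\ge n+2$. Then $\partial'_k(\varphi_k-D_{k-1}\partial_k)=\varphi_{k-1}\partial_k-(\varphi_{k-1}-D_{k-2}\partial_{k-1})\partial_k=0$, using that $\varphi_*$ is a chain map and $\partial\partial=0$. Hence $\varphi_k-D_{k-1}\partial_k$ carries $P_k(G)$ into $\Im\partial'_{k+1}$, and since $P_k(G)$ is projective it lifts through the surjection $\partial'_{k+1}:P_{k+1}(H)\twoheadrightarrow\Im\partial'_{k+1}$ to a $D_k$ with $\partial'_{k+1}D_k=\varphi_k-D_{k-1}\partial_k$, i.e. $\psi_k=0$. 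Thus the only genuine obstruction is to \emph{start} the induction at degree $n+1$: I must produce $D_n:P_n(G)\to P_{n+1}(H)$ such that $\partial'_{n+1}D_n$ agrees with $\varphi_n$ on $Z_n:=\Ker\partial_n=\Im\partial_{n+1}$, for then the degree $n+1$ obstruction $(\varphi_n-\partial'_{n+1}D_n)\partial_{n+1}$ vanishes and the lifting proceeds. Because $\varphi_*$ is a chain map, $\varphi_n$ sends $Z_n$ into $Z'_n:=\Ker\partial'_n=\Im\partial'_{n+1}$, and any $G$-map $P_n(G)\to Z'_n$ lifts through $\partial'_{n+1}$ by projectivity of $P_n(G)$; so it suffices to extend $g:=\varphi_n|_{Z_n}:Z_n\to Z'_n$ to a $G$-homomorphism $P_n(G)\to Z'_n$.

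The obstruction to this extension is the image of $g$ under the connecting map $\Hom_G(Z_n,Z'_n)\xrightarrow{\ \delta\ }\Ext^1_G(B_{n-1},Z'_n)$ associated with $0\to Z_n\to P_n(G)\xrightarrow{\partial_n}B_{n-1}\to 0$, where $B_{n-1}=\Im\partial_n$. Dimension shifting along $P_*(G)$ identifies $\Ext^1_G(B_{n-1},Z'_n)\cong\Ext^{n+1}_G(\Z,Z'_n)=H^{n+1}(G;Z'_n)$, with $Z'_n$ regarded as a $G$-module through $\varphi$. The chain map $\varphi_*$ gives a morphism of short exact sequences from the displayed one to $0\to Z'_n\to P_n(H)\to B'_{n-1}\to 0$, with left-hand map $g$ and right-hand map induced by $\varphi_{n-1}$, so naturality of $\delta$ yields $\delta(g)=\bar\varphi^*\bigl(\delta'(\id_{Z'_n})\bigr)$, the pullback of the canonical class $\theta:=\delta'(\id_{Z'_n})\in\Ext^1_H(B'_{n-1},Z'_n)\cong H^{n+1}(H;Z'_n)$. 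Under the dimension-shift identifications this pullback is exactly $\varphi^*:H^{n+1}(H;Z'_n)\to H^{n+1}(G;Z'_n)$. Since $\cd(\varphi)=n$, the map $\varphi^*$ is zero on $H^{n+1}(H;A)$ for every $H$-module $A$, in particular $A=Z'_n$, whence $\delta(g)=\varphi^*\theta=0$, $g$ extends, and the induction starts.

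I expect the main obstacle to be this last identification: verifying that the naturality square for $\delta$, combined with the two dimension-shift isomorphisms and restriction of scalars along $\varphi$, really turns $\bar\varphi^*$ into the cohomology map $\varphi^*$ in degree $n+1$. This requires comparing the $\Ext$ groups computed over $G$ and over $H$ and checking that the iterated connecting isomorphisms are compatible with the chain map $\varphi_*$; the bookkeeping in the lowest degrees, including the minor case $n=0$ where one argues with $Z_0=\I_G$, is routine once this compatibility is in place.
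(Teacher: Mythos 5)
Your proposal is correct and follows essentially the same route as the paper: both arguments use $\cd(\varphi)=n$ to kill the degree-$(n{+}1)$ obstruction class represented by the corestriction of $\partial'_{n+1}$ onto $\Im\partial'_{n+1}$, thereby producing $D_n$, and then run the identical upward induction via projectivity of $P_k(G)$ and exactness of $P_*(H)$. The only difference is packaging: the paper treats that corestriction directly as a cocycle in $\Hom_H(P_{n+1}(H),\Im\partial'_{n+1})$, so that $\varphi^*[d'_{n+1}]=0$ immediately yields the coboundary $h_n\partial_{n+1}$ and hence $D_n$, avoiding the $\Ext^1$/dimension-shifting bookkeeping that you flag as the delicate step (which does check out, and in fact yields the same $D_n$).
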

\begin{proof}

Let $d'_{n+1}:P_{n+1}(H)\to K_{n}=\Im(\partial_{n+1}')$ be the range restriction for $\partial_{n+1}'$. We have the following  commutative diagram:

\[ \begin{tikzcd}
    & \cdots \arrow{r} 
    & P_{n+2}(G) \arrow{r}{\partial_{n+2}} \arrow{d} {\varphi_{n+2}}
    & P_{n+1}(G) \arrow{r}{\partial_{n+1}} \arrow{d} {\varphi_{n+1}}
    & P_{n}(G)   \arrow{r}                 \arrow{d} {\varphi_{n}}
    & \cdots \\
    & \cdots \arrow{r} 
    & P_{n+2}(H) \arrow {r}{\partial_{n+2}'} \arrow{dr} {d'_{n+2}}
    & P_{n+1}(H) \arrow {r}{\partial_{n+1}'} \arrow{dr} {d'_{n+1}}
    & P_{n}(H)   \arrow {r}
    & \cdots \\
    & \cdots
    & K_{n+2} \arrow {u}{\subset}
    & K_{n+1}\arrow {u}{\subset}
    & K_{n}\arrow {u}{\subset}
    & \cdots
\end{tikzcd} \]
Since \(\partial'_{n+2} d'_{n+1} = 0\), we obtain
    that \(d'_{n+1}\) is a cocycle in the
    cochain complex \(\Hom_H(P_*(H), K_n)\).
    Therefore, it represents an element \([d'_{n+1}]\in H^{n+1}(H,
    K_n)\). Since \(\cd(\varphi) = n\), we get that \(\varphi^*[d'_{n+1}] = 0\).
    That is, $$d_n := d'_{n+1}\varphi_{n+1} : P_{n+1}(G) \to K_n$$ is a coboundary
    in the cochain complex \(\Hom_G(P_*(G), K_n)\). So there exists a map \(h_n: P_n(G)
    \to K_n\) such that \(h_n\partial_{n+1} = d_n\). But since \(P_n(G)\) is a projective \(G\)-module and \(d'_{n+1}\) is  surjective, 
the map \(h_n\) can be lifted to a $G$-homomorphism \(D_n : P_n(G) \to P_{n+1}(H)\),
    such that \(d'_{n+1}D_n = h_n\). Note that the \(D_n\) as constructed has
    the property 
    $$
        \partial'_{n+1}(\varphi_{n+1} - D_n\partial_{n+1})
          = \partial'_{n+1}\varphi_{n+1} -
         \partial'_{n+1}D_n\partial_{n+1} \\
         = \partial'_{n+1}\varphi_{n+1} - h_n\partial_{n+1} = 0.$$

    Now assume that we have constructed \(D_{n+i-1}\) such that
    $$
        \partial'_{n+i}(\varphi_{n+i} - D_{n+i-1}\partial_{n+i}) = 0.$$
    Since the bottom row is exact, $$\Im( \varphi_{n+i} - D_{n+i-1}\partial_{n+i}) \subset K_{n+i}=\Im(\partial_{n+i+1}').$$ Since $P_{n+i}(G)$ is a projective $G$-module  the $G$-homomorphism $$\varphi_{n+i} -
    D_{n+i-1}\partial_{n+i}: P_{n+i}(G)\to K_{n+i}$$ can be lifted to a $G$-homomorphism \(D_{n+i} : P_{n+i}(G) \to
    P_{n+i+1}(H)\). Since
    \begin{equation}\label{6}
        \partial'_{n+i+1} D_{n+i} = \varphi_{n+i} - D_{n+i-1}\partial_{n+i}
    \end{equation}
    we obtain
    \begin{equation*} \begin{aligned}
        \partial'_{n+i+1}(\varphi_{n+i+1} - D_{n+i}\partial_{n+i+1})
        & = \partial'_{n+i+1}\varphi_{n+i+1} -
        \partial'_{n+i+1}D_{n+i}\partial_{n+i+1} \\
        & = \partial'_{n+i+1}\varphi_{n+i+1} -
        (\varphi_{n+i} - D_{n+i-1}\partial_{n+i})\partial_{n+i+1} \\ 
        & = \partial'_{n+i+1}\varphi_{n+i+1} -
        \varphi_{n+i}\partial_{n+i+1} -
        D_{n+i-1}\partial_{n+i}\partial_{n+i+1} \\
        & = \partial'_{n+i+1}\varphi_{n+i+1} -
        \varphi_{n+i}\partial_{n+i+1} \\
        & = 0 
    \end{aligned} \end{equation*}
    Here we are using the facts that \(\partial\partial = 0\) and \(\varphi_*\)
    is a chain map.

    Therefore we can inductively construct \(D_{k}\)'s for all \(k\ge n\). Define
    \(D_k: P_k(G) \to P_{k+1}(H)\) to be the zero homomorphisms for all \(k<n\). Therefore we
    have constructed a chain homotopy \(D_* : P_*(G) \to P_{*+1}(H)\) between \(\varphi_*\)
    and a chain map \(\psi_*\) defined by
    $$\psi_k = \varphi_k - \partial'_{k+1} D_k - D_{k-1} \partial_k.$$
    In view of~(\ref{6}) we obtain that \(\psi_k = 0\) for all \(k>n\).
\end{proof}

\section{Reduction to a map of aspherical manifold}

\begin{prop}\label{comp}
Let $F$ be any of the numerical invariants $\cd_\k$, $\hd_\k$, or $\gd$. Then for homomorphisms $A\stackrel{g}\to G\stackrel{f}\to H$ $$F(f\circ g)\le\min\{F(f),F(g)\}.$$
\end{prop}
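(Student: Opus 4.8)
The plan is to split the three invariants into two groups: the algebraic invariants $\cd_\k$ and $\hd_\k$, handled uniformly via functoriality of the induced maps on group (co)homology, and the geometric invariant $\gd$, handled via the Lusternik--Schnirelmann characterization of the preceding proposition. In each case the strategy is the same: a composite of maps that is nonzero (respectively, admits a small covering) forces each factor to be nonzero (respectively, to admit a covering of the same size).

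For $\cd_\k$ I would use the contravariant factorization $(f\circ g)^*=g^*\circ f^*$. Setting $n=\cd_\k(f\circ g)$, I choose a $\k H$-module $M$ witnessing this, so that $g^*\circ f^*\colon H^n(H,M)\to H^n(A,M)$ is nonzero. Then $f^*\colon H^n(H,M)\to H^n(G,M)$ must itself be nonzero, which gives $\cd_\k(f)\ge n$. Viewing the \emph{same} module $M$ as a $\k G$-module by restriction along $f$, the nonvanishing of the composite also forces $g^*\colon H^n(G,M)\to H^n(A,M)$ to be nonzero, giving $\cd_\k(g)\ge n$. Hence $\cd_\k(f\circ g)\le\min\{\cd_\k(f),\cd_\k(g)\}$. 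The homological case is word-for-word identical after replacing the contravariant factorization by the covariant one $(f\circ g)_*=f_*\circ g_*$ and running the argument on $H_n$.

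For $\gd$ I would realize the composition by $\overline{f\circ g}\simeq\bar f\circ\bar g\colon BA\to BG\to BH$ and use the equivalence $\gd=\cat$ of the realizing map. To obtain $\gd(f\circ g)\le\gd(f)$, I take a cover $\{U_i\}$ of $BG$ witnessing $\cat(\bar f)\le\gd(f)$, with each $\bar f|_{U_i}$ null-homotopic; then $\{\bar g^{-1}(U_i)\}$ covers $BA$, and each restriction $(\bar f\circ\bar g)|_{\bar g^{-1}(U_i)}$ factors through $\bar f|_{U_i}$ and is therefore null-homotopic. To obtain $\gd(f\circ g)\le\gd(g)$, I take a cover $\{W_j\}$ of $BA$ witnessing $\cat(\bar g)\le\gd(g)$, with each $\bar g|_{W_j}$ null-homotopic; postcomposing each null-homotopy with $\bar f$ shows that $(\bar f\circ\bar g)|_{W_j}$ is null-homotopic as well. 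Either cover thus certifies the desired bound.

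The main obstacle is bookkeeping rather than conceptual. In the $\cd_\k$ and $\hd_\k$ arguments one must be careful that a \emph{single} $H$-module $M$ simultaneously certifies the bounds for both $f$ and $g$, which requires tracking how $M$ is restricted along $f$ to a $G$-module so that $g^*$ (respectively $g_*$) is taken with the correct coefficients. In the $\gd(g)$ direction the point to verify is that a cover adapted to $g$ remains adapted to the composition; this is immediate since postcomposition with $\bar f$ preserves null-homotopy, but it is the step where the asymmetry between the two factors is resolved.
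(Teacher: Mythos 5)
Your proof is correct and follows essentially the same route as the paper: the $\cd_\k$ and $\hd_\k$ cases by functoriality of the induced maps (which the paper dismisses as obvious), and the $\gd$ case by composing with a realizing map. The only cosmetic difference is that for $\gd(f\circ g)\le\gd(g)$ the paper argues directly with a cellular approximation of $\bar f$ carrying the $n$-skeleton of $BG$ into the $n$-skeleton of $BH$, while you use the equivalent Lusternik--Schnirelmann formulation; both are immediate.
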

\begin{proof}
The statement is obvious for the cohomological and homological dimensions. Clearly, $\gd(f\circ g)\le \gd(f)$. We may assume that the map $\bar f:BH\to BA$ realizing $f$
is cellular. Then the inequality $\gd(f\circ g)\le \gd(g)$ follows.
\end{proof}
\begin{cor}
Suppose that $r:BA\to BG$ is a retraction. Then for any $f:BG\to BH$ and any above choice of the numerical invariant $F$, $F(f\circ r)=F(f)$.
\end{cor}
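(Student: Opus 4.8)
The plan is to apply \propref{comp} twice, exploiting the fact that a retraction comes equipped with a section. First I would record the easy inequality: viewing $r$ as the homomorphism $A\to G$ it induces on fundamental groups and applying \propref{comp} to the composition $A\xrightarrow{r}G\xrightarrow{f}H$ immediately yields $F(f\circ r)\le\min\{F(f),F(r)\}\le F(f)$. This direction uses nothing beyond subadditivity.

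For the reverse inequality I would invoke the defining property of a retraction. By definition $r:BA\to BG$ admits an inclusion $i:BG\to BA$, realizing a homomorphism $G\to A$, with $r\circ i=\id_{BG}$ (up to homotopy, since all spaces here are aspherical). Consequently $(f\circ r)\circ i=f\circ(r\circ i)\simeq f$. Applying \propref{comp} now to $G\xrightarrow{i}A\xrightarrow{f\circ r}H$ gives $F(f)=F\bigl((f\circ r)\circ i\bigr)\le\min\{F(f\circ r),F(i)\}\le F(f\circ r)$. Combining the two displayed inequalities produces the claimed equality $F(f\circ r)=F(f)$, uniformly for $F\in\{\cd_\k,\hd_\k,\gd\}$, since \propref{comp} holds for each of these invariants.

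The only point requiring care is the homotopy invariance used in the step $F\bigl((f\circ r)\circ i\bigr)=F(f)$: I rely on the fact that homotopic maps between aspherical complexes induce, up to conjugation, the same homomorphism on fundamental groups, so that each of $\cd_\k$, $\hd_\k$, and $\gd$ assigns them the same value. I do not expect any genuine obstacle; the entire argument is a two-sided application of \propref{comp}, with the section $i$ of the retraction supplying the factorization needed for the nontrivial inequality.
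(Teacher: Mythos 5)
Your proof is correct and is essentially identical to the paper's: both directions come from a two-sided application of Proposition~\ref{comp}, using the section $i$ with $r\circ i=\id$ to get $F(f)=F((f\circ r)\circ i)\le F(f\circ r)$. The extra remark on homotopy invariance is a reasonable (if routine) point of care and does not change the argument.
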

\begin{proof}
Let $i:BG\to BA$ be the inclusion for which $r\circ i=1$. Then $$F(f\circ r)\le F(f)=F((f\circ r)\circ i)\le F(f\circ r).$$
\end{proof}
\begin{cor}\label{mnfld}
For every homomorphism $\phi:G\to H$ of  a geometrically finite group $G$ there is a  closed aspherical orientable manifolds $M$ containing $BG$ as a retract and a map $g:M\to BH$ such that
$\phi=(g|_{BG})_*:\pi_1(BG)\to\pi_1(BH)$ and $F(\phi)=F(g)$ for the above choice of $F$.
\end{cor}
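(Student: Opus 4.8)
The plan is to realize $\phi$ by a cellular map $\bar\phi:BG\to BH$ and then to exhibit a closed aspherical orientable manifold $M$ that retracts onto $BG$; once we set $g:=\bar\phi\circ r$ for the retraction $r:M\to BG$, both conclusions $\phi=(g|_{BG})_*$ and $F(\phi)=F(g)$ will follow formally. Since $G$ is geometrically finite, $BG$ may be taken to be a finite complex, and by definition $F(\phi)=F(\bar\phi)$ for each of $F=\cd_\k,\hd_\k,\gd$.

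To build $M$ I would use Davis' reflection group trick. Embed the finite complex $BG$ in some $\R^N$ and let $W$ be a regular neighborhood. Then $W$ is a compact manifold with boundary that deformation retracts onto $BG$; in particular it is aspherical, and being a codimension-$0$ submanifold of $\R^N$ it is orientable. Choosing a panel (mirror) structure on $\partial W$ whose nerve is a flag complex --- for instance the closed stars of the vertices of the barycentric subdivision of a triangulation of $\partial W$ --- Davis' trick produces a closed aspherical manifold $M=\mathcal U(W_L,W)$ in which $W$ sits as a fundamental chamber, together with the chamber-projection retraction $\rho:M\to W$. Composing $\rho$ with the deformation retraction $W\to BG$ yields a retraction $r:M\to BG$ splitting the inclusion $i:BG\hookrightarrow W\hookrightarrow M$, i.e. $r\circ i=\id$.

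It remains to make $M$ orientable. If $M$ is already orientable there is nothing to do; otherwise I would pass to the orientation double cover $p:\wh M\to M$, a connected closed orientable aspherical manifold. The point is that the orientation character $w_1(M):\pi_1(M)\to\Z_2$ restricts trivially to $i_*(G)$: over the chamber one has $TM|_W\cong TW$, and $W$ is orientable, so the restriction of $w_1(M)$ to $W$ equals $w_1(TW)=0$, whence $i^*w_1(M)=0$ (equivalently, every class in $i_*(G)$ is carried by a loop in the chamber $W$, which crosses no mirror). Hence $i$ lifts to an embedding $\wh i:BG\hookrightarrow\wh M$ with $p\circ\wh i=i$, and $\wh r:=r\circ p$ retracts $\wh M$ onto this copy of $BG$ since $\wh r\circ\wh i=r\circ p\circ\wh i=r\circ i=\id$. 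Replacing $M$ by $\wh M$, we may assume $M$ is closed, aspherical, and orientable.

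Finally, set $g:=\bar\phi\circ r:M\to BH$. Because $r\circ i=\id$ we have $g|_{BG}=\bar\phi$, so $(g|_{BG})_*=\phi$. Writing $M=BA$ with $A=\pi_1(M)$, the retraction $r:BA\to BG$ and the map $\bar\phi:BG\to BH$ fit the hypotheses of the preceding Corollary, which gives $F(g)=F(\bar\phi\circ r)=F(\bar\phi)=F(\phi)$ for each $F\in\{\cd_\k,\hd_\k,\gd\}$. The main obstacle is the orientability bookkeeping of the third paragraph --- one must know that the Davis chamber can be taken orientable and that the orientation character dies on $i_*(G)$, so that the retraction is inherited by the orientation double cover; the asphericity, compactness, and retraction onto $W$ are precisely the output of Davis' trick and require no new argument.
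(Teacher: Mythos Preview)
Your proof is correct and follows the same approach as the paper: invoke Davis' trick to obtain a closed aspherical orientable manifold $M$ retracting onto $BG$, set $g=\bar\phi\circ r$, and apply the preceding retraction corollary to get $F(g)=F(\phi)$. The paper's proof is a one-line citation of Davis' trick as a black box (orientability included), whereas you unpack the construction and supply an explicit orientability fix via the orientation double cover; this is a harmless elaboration rather than a different route.
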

\begin{proof}
By Davis' trick~\cite{D} for every group $G$ with finite complex $BG$ there is a closed aspherical orientable manifold containing $BG$ as a retract.
\end{proof}

\begin{thm}\label{epi}
Let homomorphism $\phi:G\to H$ factor as $\phi=j\circ\phi'$ where $\phi'$ is surjective and $j$ is injective. Then for any choice $F$ of the numerical invariants $\cd_\k$, $\hd_\k$, or $\gd$
we have $F(\phi)=F(\phi')$.
\end{thm}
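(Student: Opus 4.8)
The plan is to prove the two inequalities $F(\phi)\le F(\phi')$ and $F(\phi')\le F(\phi)$ separately. The first is immediate from \propref{comp}: writing the factorization as $G\stackrel{\phi'}\to H'\stackrel{j}\to H$, where $H'=j^{-1}(\operatorname{Im}\phi)$ is the intermediate group, we get $F(\phi)=F(j\circ\phi')\le\min\{F(j),F(\phi')\}\le F(\phi')$. All the content lies in the reverse inequality $F(\phi')\le F(\phi)$, and here I would use only that $j$ realizes $H'$ as a subgroup of $H$.

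For $F=\gd$ I would argue geometrically. Realize the inclusion $j:H'\hookrightarrow H$ by the connected covering $\bar j:BH'\to BH$ associated to the subgroup $H'\le\pi_1(BH)$; since a covering of an aspherical complex is aspherical, its total space is a $K(H',1)$, and because $\operatorname{Im}\phi\subset H'$ the map $\bar\phi$ lifts through $\bar j$, so we may take $\bar{\phi'}$ to be such a lift. Equip $BH'$ with the CW structure lifted from $BH$, making $\bar j$ cellular with $(\bar j)^{-1}(BH^{(n)})=(BH')^{(n)}$. If $\gd(\phi)\le n$, then $\bar\phi\sim f$ with $f(BG)\subset BH^{(n)}$; lifting this homotopy through the covering carries $\bar{\phi'}$ to a lift $\tilde f$ of $f$, and $\tilde f(BG)\subset(\bar j)^{-1}(BH^{(n)})=(BH')^{(n)}$, whence $\gd(\phi')\le n$.

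For $F=\cd_\k$ and $F=\hd_\k$ the tool is Shapiro's lemma. Suppose $A$ is an $H'$-module (a $G$-module via $\phi'$) with $(\phi')^*:H^n(H',A)\to H^n(G,A)$ nonzero, $n=\cd_\k(\phi')$. Put $B=\operatorname{Coind}_{H'}^H A=\Hom_{\k H'}(\k H,A)$, an $H$-module; Shapiro's lemma gives an isomorphism $H^n(H,B)\cong H^n(H',A)$ realized as $\varepsilon_*\circ j^*$, where $j^*$ is restriction to $H'$ and $\varepsilon:\operatorname{Res}_{H'}B\to A$ is the counit. The crux is the naturality square coming from $\phi=j\circ\phi'$: the identity $\phi^*=(\phi')^*\circ j^*$ together with compatibility of $\varepsilon_*$ with $(\phi')^*$ shows that if $\tilde\alpha\in H^n(H,B)$ corresponds to $\alpha$ under Shapiro, then $\varepsilon_*(\phi^*\tilde\alpha)=(\phi')^*(\alpha)\ne 0$, so $\phi^*\tilde\alpha\ne 0$ and $\cd_\k(\phi)\ge n$. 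The homological case is formally dual: take $B=\Ind_{H'}^H A=\k H\otimes_{\k H'}A$, use the unit $\eta:A\to\operatorname{Res}_{H'}B$ and $\phi_*=j_*\circ(\phi')_*$ to conclude that a class $x$ with $(\phi')_*(x)\ne 0$ satisfies $\phi_*(\eta_* x)\ne 0$.

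The step I expect to be the main obstacle is pinning down these Shapiro naturality squares precisely: the existence of the isomorphism $H^n(H,B)\cong H^n(H',A)$ is classical, but to transport nonvanishing from $(\phi')^*$ to $\phi^*$ one must know it is induced by the explicit maps $j^*,\varepsilon$ (resp. $j_*,\eta$) and then verify that these fit the naturality diagram for $\phi=j\circ\phi'$. A lesser point is the CW bookkeeping in the $\gd$ argument, where one must arrange a cell structure on $BH'$ making $\bar j$ cellular with $(\bar j)^{-1}(BH^{(n)})=(BH')^{(n)}$ so that the lifted homotopy indeed lands in the $n$-skeleton. Notice that neither direction uses surjectivity of $\phi'$; the whole result rests on $j$ being injective, the hypothesis on $\phi'$ serving only to single out the canonical epi--mono factorization.
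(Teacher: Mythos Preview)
Your proposal is correct. The $\hd_\k$ case is exactly the paper's argument: it too proves $F(\phi)\ge F(\phi')$ by pushing a witnessing $H'$-module $M$ to the induced $H$-module $\Ind_{H'}^H M$, invoking Shapiro's lemma for the bottom row $H_k(H',M)\hookrightarrow H_k(H',\Ind_{H'}^HM)\to H_k(H,\Ind_{H'}^HM)$, and reading off that $\phi_*=j_*(\phi')_*$ is nonzero on the induced coefficients.

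For $F=\gd$ and $F=\cd_\k$ the paper does not argue directly; it simply cites Theorems~3.2 and~3.3 of~\cite{DK} (after identifying $\gd(\phi)=\cat(\bar\phi)$). Your routes are genuinely different and self-contained: the covering-space argument for $\gd$ bypasses the Lusternik--Schnirelmann machinery entirely, and your cohomological case supplies the dual Shapiro argument with the coinduced module $\operatorname{Coind}_{H'}^HA$ and counit $\varepsilon$, which is the natural companion to the homological argument but is not written out in the paper. Both are standard and the naturality you flag as the ``main obstacle'' is routine (the Shapiro isomorphism is literally $\varepsilon_*\circ j^*$, resp.\ $j_*\circ\eta_*$, so compatibility with $(\phi')^*$, resp.\ $(\phi')_*$, is just functoriality of coefficient change). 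Your closing remark that only injectivity of $j$ is used is exactly the content of the paper's Corollary~\ref{any}, which it derives from the theorem rather than noting it during the proof.
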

\begin{proof}
In view of the equality $\gd(\phi)=\cat(\phi)$, Theorem 3.2 and Theorem 3.3 in~\cite{DK}, the statement is proven when $F=\gd$ ands $F=\cd$.

By Proposition~\ref{comp} $\hd(\phi)\le \hd(\phi')$. Let $\hd(\phi')=k$. We show that $\hd(\phi)\ge k$.
Let $\pi=\phi'(G)$ and let $\phi'_*:H_k(G,M)\to H_k(\pi,M)$ be a nonzero homomorphism for some $\pi$-module $M$. 
Let $i$ denote the inclusion $M\cong 1\otimes M\stackrel{\subset}\to\mathbb ZH\otimes_\pi M=\Ind^H_\pi M$  of $M$ into the induced $H$-module.
We consider the following commutative diagram generated by $i$, the inclusion $j:\pi\to H$, and $\phi'$
$$
\begin{CD}
H_k(G,M) @>i_*>> H_k(G,\Ind_\pi^HM) @.\\
@V\phi'_*VV @V\phi'_*VV @.\\
H_k(\pi,M) @>i_*>> H_k(\pi,\Ind_\pi^HM) @>j_*>> H_k(H,\Ind^H_\pi M).\\
\end{CD}
$$
The bottom composition $j_*i_*$  is the Shapiro Lemma isomorphism~\cite{Bro}. Therefore, $j_*i_*\phi'_*\ne 0$. Thus, the composition
$$\phi_*=j_*\phi'_* :H_k(G,\Ind_\pi^HM)\to H_k(H,\Ind^H_\pi M)$$ is not zero. Hence  $\hd(\phi)\ge k$.

The same proof works for any ground ring $\k$.
\end{proof}

\begin{cor}\label{any}
Let homomorphism $\phi:G\to H$ factor as $\phi=j\circ\phi'$ where  $j$ is injective. Then for any choice $F$ of the numerical invariants $\cd_\k$, $\hd_\k$, or $\gd$
we have $F(\phi)=F(\phi')$.
\end{cor}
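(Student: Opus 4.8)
The goal is to deduce Corollary~\ref{any} from Theorem~\ref{epi}, dropping the hypothesis that the surjective factor is literally named. The plan is to reduce the general factorization $\phi = j\circ\phi'$ with $j$ injective to the setting of Theorem~\ref{epi} by inserting the image of $\phi'$ as an intermediate group. First I would set $\pi = \phi'(G)\le \phi'$'s target, so that $\phi'$ itself factors as $\phi' = \iota\circ q$ where $q:G\to\pi$ is the corestriction of $\phi'$ (now genuinely surjective onto $\pi$) and $\iota:\pi\hookrightarrow$ (target of $\phi'$) is the inclusion. Then $\phi = j\circ\iota\circ q$, where $j\circ\iota$ is injective as a composite of injections and $q$ is surjective.

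With this rewriting in place, Theorem~\ref{epi} applies directly: since $\phi = (j\circ\iota)\circ q$ with $q$ surjective and $j\circ\iota$ injective, we get $F(\phi) = F(q)$ for each of the invariants $F\in\{\cd_\k,\hd_\k,\gd\}$. It remains to identify $F(q)$ with $F(\phi')$. Here I would apply Theorem~\ref{epi} a second time to the factorization $\phi' = \iota\circ q$, which again has $\iota$ injective and $q$ surjective; this yields $F(\phi') = F(q)$. Combining the two equalities gives $F(\phi) = F(q) = F(\phi')$, which is exactly the assertion of the corollary.

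The argument is essentially bookkeeping, so I do not anticipate a genuine obstacle; the only point requiring a little care is the verification that corestriction-then-inclusion really does recover $\phi'$ and that each intermediate map carries the injectivity or surjectivity property needed to invoke Theorem~\ref{epi}. One should note that the surjectivity of $q$ is immediate from the definition $\pi = \phi'(G)$, and injectivity of $j\circ\iota$ follows because both factors are injective. Since Theorem~\ref{epi} is stated uniformly for all three invariants and for an arbitrary ground ring $\k$, both applications go through without modification, and the conclusion holds for every $F$ in the stated list.
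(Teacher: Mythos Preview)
Your argument is correct and is essentially identical to the paper's own proof: both introduce the corestriction $q:G\twoheadrightarrow \phi'(G)$ (the paper calls it $\phi''$) and the inclusion $\iota:\phi'(G)\hookrightarrow H'$ (the paper's $i$), then apply Theorem~\ref{epi} twice, once to $\phi=(j\iota)q$ and once to $\phi'=\iota q$, to obtain $F(\phi)=F(q)=F(\phi')$.
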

\begin{proof}
Let $\phi':G\to H'$, $j:H'\to H$, and let $\phi'':G\to\Im\phi$ denote the range restriction of $\phi$. Let $i:\Im(G)\to H'$ be the inclusion. Then by Theorem~\ref{epi} 
applied to $\phi=(ij)\phi''$ and $\phi'=i\phi''$, 
$$F(\phi)=F((ij)\phi'')
=F(\phi'')=F(i\phi'')=F(\phi').$$
\end{proof}

\begin{cor}\label{bothmnflds}
If the group $H$ in Corollary~\ref{mnfld} is geometrically finite, then there is a closed orientable aspherical manifold $N$ containing $BH$ and a map  $f:M\to N$ such that 
$f(BG)\subset BH$, $\phi=(f|_{BG})_*:\pi_1(BG)\to\pi_1(BH)$ and $F(\phi)=F(g)$ for the above choice of $F$.

\end{cor}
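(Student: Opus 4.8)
The plan is to apply Davis' trick a second time, now to the group $H$, and then to land the map $g$ from \corref{mnfld} inside the resulting manifold by composing it with an inclusion. First I would invoke Davis' trick~\cite{D} for the geometrically finite group $H$ (which has a finite $BH$) to obtain a closed orientable aspherical manifold $N$ containing $BH$ as a retract; write $\iota:BH\hookrightarrow N$ for the inclusion and $r:N\to BH$ for the retraction, so that $r\circ\iota=\id$. I then set $f:=\iota\circ g:M\to N$, where $g:M\to BH$ is the map supplied by \corref{mnfld}.

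Next I would check the three required properties. Since $g(M)\subset BH$, we have $f(BG)=\iota(g(BG))\subset\iota(BH)=BH$, which gives the containment $f(BG)\subset BH$. Viewing $f|_{BG}$ as a map with image in $BH$, it coincides with $g|_{BG}$, so $(f|_{BG})_*=(g|_{BG})_*=\phi$ as homomorphisms $\pi_1(BG)\to\pi_1(BH)$. It remains to identify the numerical invariant.

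For the invariant, note that $f$ induces $f_*=\iota_*\circ g_*$ on fundamental groups, where $g_*:\pi_1(M)\to H$ and $\iota_*:H\to\pi_1(N)$. Because $BH$ is a retract of $N$, the inclusion $\iota$ is $\pi_1$-injective, i.e. $\iota_*$ is a monomorphism. As $M$ and $N$ are aspherical, $F$ of the map $f$ is computed by the induced homomorphism, $F(f)=F(f_*)$, and \corref{any} applied to the factorization $f_*=\iota_*\circ g_*$ with $\iota_*$ injective yields $F(f)=F(g_*)=F(g)$. Combining with the equality $F(\phi)=F(g)$ already established in \corref{mnfld}, I conclude $F(\phi)=F(f)=F(g)$, which is the asserted equality.

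The construction itself is immediate, so there is no serious computational obstacle; the one point demanding care is the final step, where one must know that $\iota_*$ is injective in order to invoke \corref{any}. This is guaranteed precisely because Davis' trick produces $BH$ as a retract (and not merely as a subcomplex) of $N$. The only other thing to keep in mind is that $F$ of a map between aspherical manifolds is, by definition, read off from the induced homomorphism of fundamental groups, which is what legitimizes writing $F(f)=F(f_*)$.
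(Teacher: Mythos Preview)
Your proof is correct and follows essentially the same route as the paper: apply Davis' trick to $H$, set $f=\iota\circ g$, and deduce $F(f)=F(g)$ from the fact that the inclusion $\iota_*$ is injective on fundamental groups. The paper cites \theoref{epi} for this last step, whereas you cite \corref{any}; your choice is actually the more accurate one, since $g_*:\pi_1(M)\to H$ need not be surjective, so \corref{any} (which drops the surjectivity hypothesis on the first factor) is what is really being used.
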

\begin{proof}
If $H$ is geometrically finite we consider the embedding $i_H:BH\to N$ of $BH$ into a closed aspherical orientable manifold from the Davis' trick.
Let $f=i_H\circ g:M\to N$. By Theorem~\ref{epi}, $F(f)=F(g)$.
\end{proof}

We call a group homomorphism $\phi:G\to H$ a {\em subhomomorphism} of a group homomorphism $\phi':G'\to H'$ if there are inclusions as a subgroup $i:G\to G'$ and $j:H\to H'$
such that $\phi=j\phi' i$.
\begin{prop}\label{sub}
Let $\phi:G\to H$ be a subhomomorphism of $\phi':G'\to H'$. Then $F(\phi)\le F(\phi')$ for the above choice of $F$.
\end{prop}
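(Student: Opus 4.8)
The plan is to deduce $F(\phi)\le F(\phi')$ directly from the two results already established in this section, \propref{comp} and \corref{any}, so that no new module-theoretic or homotopy-theoretic argument is needed for any of the three invariants $\cd_\k$, $\hd_\k$, $\gd$.

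First I would rewrite the subhomomorphism hypothesis as a commutative square. The data of a subhomomorphism are the subgroup inclusions $i\colon G\to G'$ and $j\colon H\to H'$, and the defining relation says precisely that $\phi'\circ i=j\circ\phi$; that is, $\phi$ is the \emph{restriction} of $\phi'$ to the subgroup $G$, with values landing in the subgroup $H$. In particular both $i$ and $j$ are injective, which is the only feature of them I will use.

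Then the argument is a short chain valid simultaneously for $F\in\{\cd_\k,\hd_\k,\gd\}$. Since $j$ is injective, \corref{any} gives $F(\phi)=F(j\circ\phi)$. Commutativity of the square rewrites the right-hand side as $F(\phi'\circ i)$. Finally \propref{comp}, applied to the composition $G\stackrel{i}\to G'\stackrel{\phi'}\to H'$, yields $F(\phi'\circ i)\le\min\{F(\phi'),F(i)\}\le F(\phi')$. Concatenating these, $F(\phi)=F(\phi'\circ i)\le F(\phi')$, as claimed.

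The only thing to be careful about — and it is routine — is reading the correct commutative square off the definition of a subhomomorphism and checking that the injectivity hypothesis of \corref{any} is met by the subgroup inclusion $j$, which is immediate. I do not expect a genuine obstacle here: the substantive content special to each invariant, namely the Shapiro-lemma transfer underlying the $\hd_\k$ case of \theoref{epi} and the realizing-map/category reasoning for $\gd$, has already been packaged into \theoref{epi} and \corref{any}. I would only add the remark that every step is insensitive to the choice of ground ring, so the same proof works for each $\k$ at once.
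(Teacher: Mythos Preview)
Your proof is correct and is essentially identical to the paper's own argument: it applies \corref{any} to the injective $j$ to get $F(\phi)=F(j\phi)$, rewrites this as $F(\phi' i)$ via the commutative square, and then uses \propref{comp} to bound by $F(\phi')$. Your reading of the subhomomorphism condition as $j\circ\phi=\phi'\circ i$ is also the one the paper uses in its proof.
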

\begin{proof}
By Corollary~\ref{any}, the equality $j\phi=\phi' i$, and  Proposition~\ref{comp} we obtain $$F(\phi)=F(j\phi)=F(\phi' i)\le F(\phi').$$
\end{proof}

\begin{prop}\label{detect}
Suppose that a local coefficient system $\A$ on a CW complex $X$ pulls back to a trivial system  $(p')^*\A=X'\times A$  on $X'$ by a normal covering map $p':X'\to X$. Then
every homology class $a\in H_k(X;\A)$ can be detected by a cohomology class $\gamma\in H^k(X;\B)$ with some local coefficient system $\B$  with $(p')^*\B$ trivial on $X'$, that is
$0\ne a\cap\gamma\in H_0(X,\A\otimes\B)$.
\end{prop}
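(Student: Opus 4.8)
The plan is to build the detecting system $\B$ as a \emph{contragredient} of $\A$ and to reduce the whole statement to the ordinary universal coefficient theorem over $\Z$ applied to a single chain complex. Write $\pi'=\pi_1(X)/\pi_1(X')$, set $R=\Z\pi'$, and let $C_*=C_*(X')$; since the deck group $\pi'$ acts freely and cellularly on $X'$, this is a complex of free $R$-modules. Because $(p')^*\A$ is trivial, $A$ is a $\pi'$-module, and \propref{cover} identifies $H_k(X;\A)$ with $H_k(D_*)$, where $D_*=A\otimes_R C_*$ is a complex of abelian groups. Our class $a$ is then a nonzero element of $H_k(D_*)$.

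Next I would put $Q=\Q/\Z$ and take $\B$ to be the local system given by the $\pi'$-module $B=\Hom_\Z(A,\Q/\Z)$ with the contragredient action $(g\psi)(x)=\psi(g^{-1}x)$. Since $\pi_1(X')$ acts trivially on $A$, it acts trivially on $B$, so $(p')^*\B$ is again trivial, exactly as the statement requires. The key algebraic input is the tensor--hom adjunction, which gives an isomorphism of cochain complexes $\Hom_R(C_*,B)\cong\Hom_\Z(D_*,\Q/\Z)$ and hence $H^k(X;\B)\cong H^k(\Hom_\Z(D_*,\Q/\Z))$. Because $\Q/\Z$ is injective, the functor $\Hom_\Z(-,\Q/\Z)$ is exact and therefore commutes with homology, yielding a natural isomorphism $H^k(X;\B)\cong\Hom_\Z(H_k(D_*),\Q/\Z)=\Hom_\Z(H_k(X;\A),\Q/\Z)$. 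Note that no freeness of $D_*$ is needed here, only injectivity of the coefficient group.

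Since $\Q/\Z$ is a cogenerator of abelian groups, there is a homomorphism $\lambda\colon H_k(X;\A)\to\Q/\Z$ with $\lambda(a)\neq0$; let $\gamma\in H^k(X;\B)$ be the class corresponding to $\lambda$ under the isomorphism above. To see that $\gamma$ detects $a$, I would use the evaluation $A\otimes_\Z B=A\otimes_\Z\Hom_\Z(A,\Q/\Z)\to\Q/\Z$, $x\otimes\psi\mapsto\psi(x)$, which is $\pi'$-equivariant into the trivial module (the two actions cancel, $(g\psi)(gx)=\psi(x)$) and hence descends to a map $\mathrm{ev}\colon H_0(X;\A\otimes\B)=(A\otimes B)_{\pi'}\to\Q/\Z$. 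A chain-level computation of the cap product, unwound through the adjunction, should give $\mathrm{ev}(a\cap\gamma)=\lambda(a)\neq0$, whence $a\cap\gamma\neq0$.

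The main obstacle is this last step: one must verify that the cap pairing $H_k(X;\A)\otimes H^k(X;\B)\to H_0(X;\A\otimes\B)$, transported through the adjunction $\Hom_R(C_*,B)\cong\Hom_\Z(D_*,\Q/\Z)$ and composed with $\mathrm{ev}$, really becomes the tautological evaluation $\lambda\mapsto\lambda(a)$ of the universal coefficient theorem. This is a bookkeeping matter of matching the Alexander--Whitney diagonal in top degree against the coinvariants description of $H_0$ while keeping the vertex and sign conventions consistent; concretely, it reduces to evaluating a representing cocycle $f\in\Hom_\Z(D_k,\Q/\Z)$ of $\gamma$ on a representing cycle $z$ of $a$ and checking that $f(z)=\lambda(a)$.
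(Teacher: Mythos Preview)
Your argument is correct, and the acknowledged ``bookkeeping'' step is indeed routine: unwinding the adjunction, the cap product $a\cap\gamma$ lands in $(A\otimes B)_{\pi'}$ as the class of $\sum a_i\otimes g(c_i)$ for a representative $z=\sum a_i\otimes c_i$ and a representing cocycle $g$ of $\gamma$, and applying $\mathrm{ev}$ gives $\sum g(c_i)(a_i)=\tilde g(z)=\lambda(a)$, which is the Kronecker pairing.

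However, the paper argues quite differently. Instead of dualizing $A$, it builds $\B$ out of the chain complex of $X'$ itself: the coefficient module is $B'=C_k(X')/\Im\partial'_{k+1}$ with its natural $\pi'$-action, and the detecting class $\gamma$ is represented by the tautological quotient map $C_k(X')\to B'$ (pulled back to $\wt X$). The point is then purely that a cycle $z$ representing $a$ is not a boundary, so its image in $A\otimes_{\pi} B'$ is nonzero; a short Five~Lemma argument identifies the two descriptions of this target. The two approaches trade off in what they make universal: the paper's $\B$ and $\gamma$ depend only on $X'$ and $k$ and detect \emph{every} nonzero class $a$ simultaneously with a single $\gamma$, whereas your $\B=\Hom_\Z(A,\Q/\Z)$ depends only on $A$, but the detecting $\gamma$ must be chosen separately for each $a$ via the cogenerator property. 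Your route is cleaner homological algebra (injectivity and adjunction do all the work, and no comparison of $C_*(\wt X)$ with $C_*(X')$ is needed), while the paper's construction is more explicit and yields the slightly stronger ``one $\gamma$ for all $a$'' conclusion. For the application in the paper (detecting a single Poincar\'e-dual class), either version suffices.
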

\begin{proof}
Consider the
chain complex
\[
\CD \dots @>>> C_{k+1}(X') @>\partial_{k+1}'>>C_k(X') @>\pa_k'>>
C_{k-1}(X') @>>> \dots\ \ .
\endCD
\]
We set~$B': =C_k(X')/\Im \partial_{k+1}'$.  Let~$\B$ be
the corresponding local system on~$X$.  Let $p:\wt X\to X$ be the universal covering and let $p=p'\circ q$.
The map $q$ defines the following commutative diagram:
\[
\CD 
C_{k+1}(\wt X) @>\partial_{k+1}>>C_k(\wt X)@>\wt f>>\wt B @>>>0\\
@Vq_*VV @Vq_*VV @Vq_*VV @.\\
C_{k+1}(X') @>\partial_{k+1}'>>C_k(X')@>f'>>B'@>>> 0  
\endCD
\]
where $\wt B: =C_k(\wt X)/\Im \partial_{k+1}$. Since the chain complex $C_*(\wt X)$ is exact, $\wt B=C_k(\wt X)/\Ker \partial_{k}=\Im\partial_k$.
Then $\wt f$ can be identified as  the range restriction of $\partial_k$.
We define $f=f'\circ q_*:C_k(\wt X)\to B'$. Since $$\delta f(x)=f(\partial_{k+1}(x))=q_*((\wt f\circ\partial_{k+1})(x))=q_*(\partial_k\circ\partial_{k+1})(x))=0,$$
the homomorphism~$f$ can be regarded as a~$k$-cocycle with
values in~$B'$. Let~$\gamma:=[f]\in
H^k(X;\B)$ be the cohomology class of~$f$.  

Now we prove that
$
a\cap\gamma \not=0.
$
In view of Proposition~\ref{cover} we can represent the class~$a$ by a cycle
$
z\in A\otimes  C_k(\wt X)$ where $A$ be the stalk of $\A$.
Since~$z\notin \Im(1\otimes \pa_{k+1})$, we conclude that
$
(1\otimes \wt f)(z)\ne 0\in A\otimes \wt B$.

The tensor product of the above diagram with $A$ over the group ring $\Z\pi$ where $\pi=\pi_1(X)$
gives the following commutative diagram with exact rows
\[
\CD 
A\otimes_\pi C_{k+1}(\wt X) @>1\otimes\partial_{k+1}>>A\otimes_\pi C_k(\wt X)@>1\otimes\wt f>>A\otimes_\pi\wt B @>>>0\\
@V1\otimes q_*V\cong V @V1\otimes q_*V\cong V @V1\otimes q_*V\cong V @.\\
A\otimes_\pi C_{k+1}(X') @>1\otimes\partial_{k+1}'>>A\otimes_\pi C_k(X')@>1\otimes f'>>A\otimes_\pi B'@>>> 0  
\endCD
\]
Since the action of the group $\pi'=\pi_1(X')\subset\pi$ on $A$ is trivial, the left two vertical arrows in the diagram are isomorphisms. 
By the Five Lemma the right vertical arrow is an isomorphism as well.
Then 
$$
(1\otimes f)(z)=(1\otimes f')(1\otimes q_*)(z)=(1\otimes q_*)(1\otimes\wt f)(z)\ne 0\in  A\otimes_\pi B'=H_0(X;\A\otimes\B).
$$

Thus, for the cohomology class~$\gamma$ of~$f$ we have~$a\cap \gamma \ne 0$.
\end{proof}

\begin{thm}\label{hd}
For any choice of commutative ring with unit $\k$, for every homomorphism $\phi:G\to H$ of geometrically finite group $G$, $$\hd_\k(\phi)= \cd_\k(\phi).$$
\end{thm}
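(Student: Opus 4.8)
The plan is to reduce, via the machinery of Section~4, to a map out of a closed aspherical manifold and then play Poincar\'e duality against Proposition~\ref{detect}. First I would use Corollary~\ref{any} to replace $H$ by $\Im\phi$ and thereby assume $\phi\colon G\to H$ is surjective, which changes neither $\cd_\k$ nor $\hd_\k$. Next, by Corollary~\ref{mnfld} (Davis' trick) there is a closed aspherical orientable manifold $M$ of some dimension $m$ containing $BG$ as a retract, together with a map $g\colon M\to BH$ realizing a homomorphism $\psi=g_*\colon\Gamma\to H$ with $\Gamma:=\pi_1(M)$, such that $\cd_\k(g)=\cd_\k(\phi)$ and $\hd_\k(g)=\hd_\k(\phi)$. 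Since the inclusion $BG\hookrightarrow M$ followed by $g$ realizes $\phi$ and $\phi$ is onto, $\psi$ is onto as well. It therefore suffices to prove $\hd_\k(g)=\cd_\k(g)$.

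For the inequality $\hd_\k(g)\le\cd_\k(g)$ (which in fact holds for any homomorphism, with no finiteness hypothesis) I would start, with $n=\hd_\k(g)$, from a class $b\in H_n(M;g^*\B_0)$ such that $g_*b\ne0\in H_n(BH;\B_0)$ for some $H$-module $\B_0$. Applying Proposition~\ref{detect} to the space $BH$, using its universal cover, on which every local system is trivial, produces a class $\gamma\in H^n(BH;\mathcal D)$ with $(g_*b)\cap\gamma\ne0$. The projection (Frobenius) formula $g_*(b\cap g^*\gamma)=(g_*b)\cap\gamma$ then forces $b\cap g^*\gamma\ne0$, hence $g^*\gamma\ne0$, so $g^*$ is nonzero in degree $n$ and $\cd_\k(g)\ge n$.

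The substantial direction is $\cd_\k(g)\le\hd_\k(g)$, where Poincar\'e duality enters. Let $n=\cd_\k(g)$ and choose $\alpha\in H^n(BH;\A)$ with $g^*\alpha\ne0\in H^n(M;g^*\A)$; here $n\le m$ since $H^k(M;-)=0$ for $k>m$. Capping with the fundamental class $[M]\in H_m(M;\k)$ gives, by Poincar\'e duality with local coefficients, a nonzero class $a:=[M]\cap g^*\alpha\in H_{m-n}(M;g^*\A)$. The system $g^*\A$ has $\Gamma$ acting through $\psi$, hence it is trivial on the cover $M'\to M$ with deck group $\Gamma/\Ker\psi\cong H$, so Proposition~\ref{detect} applies and yields $\gamma\in H^{m-n}(M;\B)$ with $a\cap\gamma\ne0\in H_0(M;g^*\A\otimes\B)$; by construction $\B$ is pulled back from the deck group, i.e. $\B=g^*\B_0$ for an $H$-module $\B_0$. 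Setting $c:=[M]\cap\gamma\in H_n(M;g^*\B_0)$ and using the identity $([M]\cap u)\cap v=[M]\cap(u\cup v)$ together with graded commutativity of the cup product (absorbing the sign and the swap $\B_0\otimes\A\cong\A\otimes\B_0$), I obtain $c\cap g^*\alpha=\pm\,a\cap\gamma\ne0\in H_0(M;g^*(\B_0\otimes\A))$.

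Finally I would push this degree-zero class to $BH$. Because $\psi$ is surjective, the action on any $H$-module factors onto all of $H$, so the natural map $g_*\colon H_0(M;g^*\mathcal C)\to H_0(BH;\mathcal C)$ of coinvariants is an isomorphism; hence $g_*(c\cap g^*\alpha)\ne0$. The projection formula $g_*(c\cap g^*\alpha)=(g_*c)\cap\alpha$ then gives $g_*c\ne0\in H_n(BH;\B_0)$, so $g_*$ is nonzero in degree $n$ and $\hd_\k(g)\ge n$, completing the proof. I expect the last survival step to be the main obstacle: a class detected on $M$ could a priori die under $g_*$, and it is precisely the surjectivity reduction, making $g_*$ an isomorphism on $H_0$, that rescues it. This is why I reduce to surjective $\phi$ at the outset; the alternative, if one did not reduce, would be to induce $\B_0$ up from $\Im\psi$ to $H$ and invoke Shapiro's lemma exactly as in the proof of Theorem~\ref{epi}.
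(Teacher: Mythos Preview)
Your proof is correct and, for the substantive direction $\cd_\k\le\hd_\k$, follows the paper's argument essentially verbatim: reduce to a surjection, pass via Davis' trick to a map $g$ out of a closed aspherical orientable manifold, cap a witnessing cohomology class with $[M]$, detect the resulting homology class via Proposition~\ref{detect}, and push forward using the projection formula together with the $H_0$-isomorphism afforded by surjectivity. (The paper takes the Berstein--Schwarz class $\beta_H^k$ in place of your arbitrary $\alpha$, an immaterial difference.) The one genuine divergence is in the easy inequality $\hd_\k\le\cd_\k$: the paper deduces it from Theorem~\ref{chain}, arguing that the induced chain map is chain homotopic to one vanishing above degree $\cd_\k(\phi)$, hence so is its tensor with any module; you instead apply Proposition~\ref{detect} to $BH$ and invoke the projection formula. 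Your route here needs no finiteness hypothesis, as you note, and is essentially what the paper records separately as Proposition~\ref{refine}.
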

\begin{proof}
Since the proof in the case of general ground ring $\k$ is the same as for $\mathbb Z$, one may assume that everything below is performed over the ring of integers.
In view of Theorem~\ref{epi} it suffices to prove this theorem for the case when $\phi$ is an epimorphism.
Let $r:M\to BG$ be a retraction of  a closed orientable aspherical  manifold as in Corollary~\ref{mnfld}.
We define $g=\bar\phi r:M\to BH$ where $\bar\phi:BG\to BH$ is a map realizing $\phi$ on the fundamental groups.
In view of Corollary~\ref{mnfld}, it suffices to show that $\hd(g)=\cd(g)$. Let $M'=g^*EH$ be the pull-back of the universal covering of $BH$.
Suppose that $\cd(g)=k$. Then $g^*(\beta_H^k)\ne 0$. By the Poincare Duality with local coefficients, $a=[M]\cap g^*(\beta_H^k)\ne 0$.
We note that the local coefficient system $\A$ on $M$ for the class $g^*(\beta_H^k)$ pulls back by $g$ from the system on $BH$ defined by the $H$-module $\I_H^k$.
The same system serves as coefficients for the dual homology class $a\in H_{|a|}(M;\A)$. Since $EH$ is contractible, the system $\A$ pulls back to a
trivial system on $M'$ by the covering $M'\to M$.
By Proposition~\ref{detect} there is a cohomology class $\gamma\in H_{|a|}(M;\B)$ with a local system $\B$ on $M$, that pulls back to a trivial system on $M'$, detecting the homology class $a$. 
Thus, $$([M]\cap\gamma)\cap g^*\beta_H^k\in H_0(M;\A\otimes\B)\ne 0.$$ Since all coefficients are coming from $BH$, 
the induced homomorphism $g_*$ for homology is well-defined. Since $g$ is an epimorphism of the fundamental groups, $g_*$ is an isomorphism in dimension 0.
Thus, we obtain
$$0\ne g_*(([M]\cap\gamma)\cap g^*\beta_H^k)=g_*([M]\cap\gamma)\cap\beta_H^k.$$
Hence,  $g_*([M]\cap\gamma)\ne 0$. Note that $[M]\cap\gamma\in H_k(M;\B)$. Thus, $\hd(g)\ge k$.

By Theorem~\ref{chain} the chain map $g_*: C_*(\wt M)\to C_*(EH)$ is chain homotopic to a chain map $q_*$ with $q_i=0$ for $i>k$. Then for any $H$-module
$1_M\otimes q_i=0$ for $i>k$. Therefore, the induced homomorphism of homology is 0 in dimension greater than $k$. Thus, $\hd(g)\le k$.
\end{proof}
We note that the proof of the inequality $\hd(\phi)\le\cd(\phi)$ works for any group homomorphisms.

\section{Cohomological dimension with respect to a field}

For a discrete group $\pi$ we consider the chain complex $(C_*(E\pi),\partial_*)$. We use notations $C_k=C_k(\pi)=C_k(E\pi)$ for the group of $k$-chains and 
$B_k=B_k(\pi)=\Im\partial_k$
for the group of $(k-1)$-boundaries. Since the chain complex $(C_*(E\pi),\partial_*)$ is acyclic, there are short exact sequences
\begin{equation}\label{boundary}
0\to B_{k+1}\to C_k\to B_k\to 0.
\end{equation}
\begin{prop}\label{Z}
Let $\Lambda=\pi_1(M)$ be the fundamental group of a closed orientable aspherical $n$-manifold $M$. Then  $H^k(\Lambda,B_k(\Lambda))=\Z$ for $k\le n$.
\end{prop}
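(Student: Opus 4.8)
The plan is to compute $H^k(\Lambda,B_k(\Lambda))$ by dimension shifting along the short exact sequences (\ref{boundary}), the crucial input being that the cohomology of $\Lambda$ with coefficients in the group ring is concentrated in the top degree. Since $M$ is aspherical it is a model for $B\Lambda$, so $H^j(\Lambda,\mathbb Z\Lambda)=H^j(M;\mathbb Z\Lambda)$. The universal cover $\widetilde M=E\Lambda$ is contractible, hence $H_*(M;\mathbb Z\Lambda)=H_*(\mathbb Z\Lambda\otimes_\Lambda C_*(\widetilde M))=H_*(\widetilde M)$ is $\mathbb Z$ in degree $0$ and vanishes otherwise. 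Poincaré duality with local coefficients on the closed orientable manifold $M$ then gives
\[
H^j(\Lambda,\mathbb Z\Lambda)\cong H_{n-j}(M;\mathbb Z\Lambda)=\begin{cases}\mathbb Z & j=n,\\ 0 & j\ne n.\end{cases}
\]
As $M$ is a finite complex each $C_k=C_k(\widetilde M)$ is a finitely generated free $\mathbb Z\Lambda$-module, so $H^j(\Lambda,C_k)=0$ for every $j\ne n$.

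Next I would feed this vanishing into the long exact cohomology sequence of the sequence (\ref{boundary}) with $k$ replaced by $k-1$, namely $0\to B_k\to C_{k-1}\to B_{k-1}\to 0$. Its relevant portion reads
\[
H^{k-1}(\Lambda,C_{k-1})\to H^{k-1}(\Lambda,B_{k-1})\xrightarrow{\ \delta\ }H^k(\Lambda,B_k)\to H^k(\Lambda,C_{k-1}),
\]
and both outer terms vanish precisely when $k-1\ne n$ and $k\ne n$, i.e. for $k\le n-1$. In that range $\delta$ is an isomorphism $H^{k-1}(\Lambda,B_{k-1})\cong H^k(\Lambda,B_k)$. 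Iterating down to the bottom, where $B_0=\mathbb Z$ is the image of the augmentation carrying the trivial action, I obtain for all $k\le n-1$
\[
H^k(\Lambda,B_k)\cong H^{k-1}(\Lambda,B_{k-1})\cong\cdots\cong H^0(\Lambda,B_0)=H^0(\Lambda,\mathbb Z)=\mathbb Z.
\]

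The remaining case $k=n$ is the main obstacle, since there the connecting map lands in the single degree where $H^*(\Lambda,C_*)$ fails to vanish, so the dimension-shifting isomorphism is no longer available. Instead I would argue directly: because $\dim M=n$ we have $C_{n+1}=0$, hence $B_{n+1}=0$, and the sequence (\ref{boundary}) for this top degree degenerates to an isomorphism $B_n\cong C_n$ induced by $\partial_n$. Choosing a CW structure on $M$ with a single $n$-cell (available for a closed connected manifold), we get $C_n=\mathbb Z\Lambda$, and therefore $H^n(\Lambda,B_n)=H^n(\Lambda,\mathbb Z\Lambda)=\mathbb Z$ by the computation of the first paragraph. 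This settles every $k\le n$. The only genuine subtleties are the Poincaré-duality input, which is exactly where closedness and orientability enter, and this endpoint $k=n$, where the answer (unlike the lower range, which reduces to $H^0$) depends on choosing a minimal cell structure.
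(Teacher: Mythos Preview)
Your argument is correct and follows the same overall architecture as the paper: a separate treatment of $k=n$ via a single top cell giving $B_n\cong C_n\cong\Z\Lambda$, and dimension shifting along the sequences~(\ref{boundary}) for $k<n$, both resting on the computation of $H^j(\Lambda,\Z\Lambda)$.

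The one genuine difference is how that computation is carried out. You use Poincar\'e duality with local coefficients on $M$ together with the identification $H_*(M;\Z\Lambda)\cong H_*(\widetilde M)$, which immediately gives $H^j(\Lambda,\Z\Lambda)=0$ for $j<n$ and $\Z$ for $j=n$. The paper instead passes to compactly supported cohomology of $\widetilde M$ and, for the vanishing in degrees $j<n$, invokes the McMillan--Zeeman--Stallings result that $\widetilde M\times\R^m\cong\R^{n+m}$, whence $\Sigma^m(\alpha\widetilde M)\cong S^{n+m}$ and $H^j_c(\widetilde M)=\bar H^j(\alpha\widetilde M)=0$. Your route is more economical: Poincar\'e duality with local coefficients is already used elsewhere in the paper (Theorem~\ref{hd}) and bypasses the appeal to deep geometric topology. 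The paper's route, on the other hand, records the stronger geometric fact about $\widetilde M$ and computes $H^*_c(\widetilde M)$ directly rather than through duality.
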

\begin{proof}
We may assume that $M$ has one $n$-dimensional cell and, hence, $C_n=\Z\Lambda$.
Note that $B_n\cong C_n$ and hence $H^n(\Lambda,B_n)=H^n(M;\Z\Lambda)=H^n_c(\wt M;\Z)=\Z$.

By results of McMillan~\cite{Mc}, McMillan-Zeeman~\cite{McZ}, and Stallings~\cite{St} proven almost simultaneously in early 60s, we obtain $\wt M\times\R^m\cong\R^{n+m}$ for 
some (all) $m\ge 1$. Hence 
the reduced suspension $\Sigma^m(\alpha\wt M)$ of the one point compactification $\alpha\wt M$ of the universal covering $\wt M$ of $M$ is homeomorphic to $S^{n+m}$.
Therefore, $$H^k(\Lambda,\Z\Lambda)=H^k_c(\wt M;\Z)=\bar H^k(\alpha\wt M)=0$$ for $k<n$. Since  the chai groups $C_i(\pi)=\oplus^{m_i}\Z\Lambda$ are finitely generated free $\Lambda$-modules, we obtain from the coefficient exact sequence  (\ref{boundary}), $H^{k-1}(\Lambda, B_{k-1})= H^{k}(\Lambda, B_{k})$ for $k<n$. Then the equality $H^0(\Lambda,B_0)=\Z$ implies
the equality $H^k(\Lambda,B_k)=\Z$ for $k<n$.
\end{proof}

The following Proposition is a refinement of the inequality $\hd(\pi)\le \cd(\phi)$.
\begin{prop}\label{refine}
Let $\phi:\Gamma\to \pi$ be a group homomorphism. Assume that $\phi_*:H_k(\Gamma,A)\to H_k(\pi,A)$ is a nontrivial homomorphism for some $\pi$-module
$A$. Then $\phi^*:H^k(B\pi;B_k(\pi))\to H^k(B\Gamma;B_k(\pi))$ is a nontrivial homomorphism.
\end{prop}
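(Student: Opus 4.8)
The plan is to exhibit a single \emph{canonical} class in $H^k(B\pi;B_k(\pi))$ that witnesses the nontriviality of $\phi^*$, namely the class represented by the boundary operator itself. Working with the free resolution $(C_*(E\pi),\partial_*)$ of $\Z$ over $\Z\pi$, let $\bar\partial_k:C_k(\pi)\to B_k(\pi)$ be the range restriction of $\partial_k$ onto its image. Since $\partial_k\partial_{k+1}=0$, the map $\bar\partial_k$ is a cocycle in $\Hom_\pi(C_*(\pi),B_k(\pi))$ and defines a canonical class $u=[\bar\partial_k]\in H^k(B\pi;B_k(\pi))$. The first observation is that $u$ is precisely the detecting class manufactured in the proof of \propref{detect}: applying that proposition to $X=B\pi$ with its universal covering $E\pi\to B\pi$, the pullback $(p')^*\A$ is automatically trivial for every $\pi$-module $A$, and there $B'=C_k(E\pi)/\Im\partial_{k+1}=\Im\partial_k=B_k(\pi)$, so the class $\gamma$ constructed is exactly $u$. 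Crucially, this $\gamma$ depends only on the resolution, not on the homology class being detected.

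Next I would use the hypothesis. Since $\phi_*:H_k(\Gamma,A)\to H_k(\pi,A)$ is nonzero, choose $b\in H_k(\Gamma,A)$ with $a:=\phi_*(b)\ne 0$ in $H_k(\pi,A)$. By \propref{detect} applied as above, the canonical class detects $a$, i.e.
$$0\ne a\cap u\in H_0(B\pi;\A\otimes\B),$$
where $\B$ is the local system associated to the $\pi$-module $B_k(\pi)$. Concretely, representing $a$ by a cycle $z\in A\otimes_\pi C_k(\pi)$, the nonvanishing of $(1\otimes\bar\partial_k)(z)\in A\otimes_\pi B_k(\pi)$ is exactly the identity $\Ker(1\otimes\bar\partial_k)=\Im(1\otimes\partial_{k+1})$ coming from right exactness of tensor applied to $0\to B_{k+1}\to C_k\to B_k\to 0$, which is the mechanism already exploited in \propref{detect}.

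The final step is naturality of the cap product (the projection formula) with respect to a map $\bar\phi:B\Gamma\to B\pi$ realizing $\phi$. All coefficient systems in sight are pulled back from $B\pi$, so $b\cap\bar\phi^*u\in H_0(B\Gamma;\bar\phi^*(\A\otimes\B))$ is defined and satisfies
$$\bar\phi_*\bigl(b\cap\bar\phi^*u\bigr)=\bar\phi_*(b)\cap u=a\cap u\ne 0.$$
Hence $b\cap\bar\phi^*u\ne0$, which forces $\bar\phi^*u\ne0$ in $H^k(B\Gamma;B_k(\pi))$. Since $\phi^*=\bar\phi^*$ carries $u$ to a nonzero class, $\phi^*:H^k(B\pi;B_k(\pi))\to H^k(B\Gamma;B_k(\pi))$ is nontrivial, as claimed.

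If one prefers not to invoke the projection formula for local coefficients as a black box, the identity $\bar\phi_*(b\cap\bar\phi^*u)=a\cap u$ can be verified directly on chains: representing $b$ by a cycle $w\in A\otimes_\Gamma C_k(E\Gamma)$ and pushing forward along the equivariant chain map $\phi_*:C_*(E\Gamma)\to C_*(E\pi)$, both sides are computed by applying $1\otimes(\bar\partial_k\circ\phi_k)$ to $w$ and comparing images under the natural surjection $A\otimes_\Gamma B_k(\pi)\to A\otimes_\pi B_k(\pi)$. I expect the only genuine subtlety to be this coefficient bookkeeping---keeping straight that $B_k(\pi)$ is used as a $\Gamma$-module via $\phi$, while the $\pi$-module $A$ disappears from the target cohomology---together with the verification that the detecting class of \propref{detect} is independent of $a$ and coincides with the canonical class $u$.
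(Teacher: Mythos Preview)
Your proof is correct and is essentially the same as the paper's: both exhibit the canonical cocycle $\bar\partial_k:C_k(\pi)\to B_k(\pi)$ as the class $u=\alpha$, show that $\phi_*(b)\cap u\ne 0$ via the right exactness of $A\otimes_\pi-$ applied to $0\to B_{k+1}\to C_k\to B_k\to 0$, and then conclude $\phi^*u\ne 0$ from the projection formula $\phi_*(b\cap\phi^*u)=\phi_*(b)\cap u$. The only cosmetic difference is that you invoke \propref{detect} to supply the nonvanishing of $a\cap u$, whereas the paper redoes that two-line computation inline.
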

\begin{proof}
Clearly, the natural epimorphism $f:C_k(\pi)\to B_k(\pi)=B_k$  is a cocycle. Let $\alpha=[f]\in H^k(B\pi;B_k)$. We show that $\phi^*(\alpha)\ne 0$.
In fact we show that $a\cap\phi^*(\alpha)\ne 0$ for any $a\in H_k(\Gamma,A)$ with $\phi_*(a)\ne 0$.
Let $z\in A\otimes_\pi C_k(\Gamma)$ be a cycle representing $a$. Then $z'=(1\otimes\phi_*)(z)\in A\otimes_\pi C_k(\pi)$ is a cycle representing $\phi_*(a)\ne 0$.
Therefore, $z'\notin\Im(1\otimes\partial_{k+1})$. Hence, $$0\ne (1\otimes f)(z')\in A\otimes_\pi B_k=H_0(\pi,A\otimes B_k).$$ Thus, $\phi_*(a)\cap\alpha\ne 0$.
 Note that $\phi_*(a)\cap\alpha=\phi_*(a\cap\phi^*(\alpha))$. Therefore, $a\cap\phi^*(\alpha)\ne 0$ and hence $\phi^*(\alpha)\ne 0$.
\end{proof}

\begin{thm} \label{k-exist}
For any homomorphism $\varphi:  \Gamma\to\pi$ between geometrically
finite groups, there is a field $\k$ such that
$\cd(\varphi) = \cd_\k(\varphi)$.
\end{thm}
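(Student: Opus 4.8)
The plan is to boil the whole statement down to a single divisibility question about one integral cohomology class, and then to detect that class over a prime field.

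First I would normalize the situation. Set $n=\cd(\varphi)$. By Corollary~\ref{bothmnflds} I may replace $\varphi$ by a map $f\colon M\to N$ of closed orientable aspherical manifolds, with $\Gamma=\pi_1(M)$, $\pi=\pi_1(N)$ and $\varphi=f_*$, because $F(\varphi)=F(f)$ for $F=\cd_\k$ and $F=\hd_\k$ and every $\k$. Thus from now on the source is a Poincar\'e duality group of some dimension $m$ and the target is a Poincar\'e duality group with \emph{finite} classifying space $B\pi=N$. Let $\alpha_\pi\in H^n(\pi;B_n(\pi))$ be the class of the canonical epimorphism $C_n(E\pi)\twoheadrightarrow B_n(\pi)$; by Proposition~\ref{Z} we have $H^n(\pi;B_n(\pi))=\Z$. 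Theorem~\ref{hd} gives $\hd(\varphi)=n$, so Proposition~\ref{refine} (whose proof treats exactly this $\alpha_\pi$) shows that $\gamma:=\varphi^*(\alpha_\pi)\in H^n(\Gamma;B_n(\pi))$ is nonzero.

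Now observe that a \emph{single field} suffices. For any field $\k$ the reduction $B_n(\pi)\to B_n(\pi)\otimes\k$ is natural, so $r_\k(\gamma)=\varphi^*(r_\k\alpha_\pi)$ is the image of $\alpha_\pi$ under $\varphi^*$ with coefficients in the $\k\pi$-module $B_n(\pi)\otimes\k$; hence $r_\k(\gamma)\neq 0$ forces $\cd_\k(\varphi)\ge n$, and together with $\cd_\k(\varphi)\le\cd(\varphi)=n$ (Proposition~\ref{integers}) this yields $\cd_\k(\varphi)=n$. Because $B_n(\pi)$ is $\Z$-free we have $B_n(\pi)\ast\k=0$, so the UCF of the excerpt gives an injection $H^n(\Gamma;B_n(\pi))\otimes\k\hookrightarrow H^n(\Gamma;B_n(\pi)\otimes\k)$ carrying $\gamma\otimes1$ to $r_\k(\gamma)$. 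Thus the theorem is \emph{equivalent} to the assertion that the nonzero class $\gamma$ survives in $H^n(\Gamma;B_n(\pi))\otimes\k$ for some prime field $\k$: if $\gamma$ has infinite order it survives for $\k=\Q$, and if $\gamma$ is torsion it survives for $\k=\Z_p$ as soon as $\gamma\notin p\,H^n(\Gamma;B_n(\pi))$. Consequently it is enough to prove that the torsion subgroup of $H^n(\Gamma;B_n(\pi))$ has bounded exponent (contains no Pr\"ufer group), for then a torsion $\gamma$ cannot be infinitely $p$-divisible for every prime dividing its order.

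The hard part will be exactly this finiteness. My plan to obtain it is to use geometric finiteness of both groups. Since $B\pi$ is finite, truncating the acyclic complex $C_*(E\pi)$ gives a finite free resolution $0\to C_{d}(E\pi)\to\cdots\to C_n(E\pi)\to B_n(\pi)\to 0$ of $B_n(\pi)$ over $\Z\pi$ with $d=\dim N$, so a finite hypercohomology spectral sequence assembles $H^n(\Gamma;B_n(\pi))$ out of finitely many subquotients of the groups $H^{*}(\Gamma;\Z\pi)$, coefficients taken through $\varphi$. Poincar\'e duality for the source and Shapiro's Lemma then identify $H^{k}(\Gamma;\Z\pi)\cong\bigoplus_{\Im\varphi\backslash\pi}H_{m-k}(\Ker\varphi;\Z)$. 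I would finish by invoking the bounded-exponent principle: a finitely generated module over a group ring that is torsion as an abelian group is annihilated by a single integer, hence is reduced, and a uniform bound is inherited by direct sums of copies of a fixed group and by the finitely many extension steps of the spectral sequence. The genuine obstacle, which I expect to absorb all the real work, is establishing the underlying finite generation in the relevant degrees, since $\Ker\varphi$—the kernel of a homomorphism of geometrically finite groups—need not itself be of type $FP$; controlling the torsion of its homology, rather than the formal reductions above, is the crux of the argument.
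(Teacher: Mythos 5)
Your first half coincides with the paper's proof: both reduce via Corollary~\ref{bothmnflds} to a map of closed orientable aspherical manifolds, invoke Theorem~\ref{hd} and Proposition~\ref{refine} to get a nonzero class $\gamma=\varphi^*(\alpha_\pi)\in H^n(\Gamma;B_n(\pi))$ with $H^n(\pi;B_n(\pi))\cong\Z$ by Proposition~\ref{Z}, and both then try to detect $\gamma$ after passing to a prime field via the UCF injection $H^n(\Gamma;B_n(\pi))\otimes\k\hookrightarrow H^n(\Gamma;B_n(\pi)\otimes\k)$ together with Proposition~\ref{integers}. Up to the dichotomy ``infinite order $\Rightarrow\Q$; finite order $\Rightarrow$ need $\gamma\notin pB$ for some prime $p$ dividing its order,'' where $B=H^n(\Gamma;B_n(\pi))$, you are tracking the paper exactly, and you have correctly isolated the delicate point in the torsion case.

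The gap is in your last paragraph, and it is twofold. First, the reduction to ``the torsion subgroup of $B$ has bounded exponent'' is logically insufficient: $\gamma\otimes1$ already vanishes in $B\otimes\Z_p$ if $\gamma\in pB$, which is a single division by $p$, not infinite $p$-divisibility. Take $B=\Z_{p^2}$ and $\gamma=p$: here the torsion has bounded exponent and $\gamma\ne0$ has order $p$, yet $\gamma\in pB$, $p$ is the only prime dividing its order, and $\gamma$ dies in $B\otimes\k$ for every field $\k$. So excluding Pr\"ufer subgroups does not close the torsion case; you would need the strictly stronger statement that $\gamma$ itself fails to be $p$-divisible for some prime $p$ dividing its order. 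Second, even the bounded-exponent claim is not established: you explicitly defer ``the crux of the argument,'' and you yourself observe that $\Ker\varphi$ need not be of type $FP$, so the Shapiro/hypercohomology sketch does not supply the finite generation it relies on. A proof that ends by naming its own unproved crux is not a proof. For comparison, the paper's Case~2 does not attempt any finiteness statement about $H^n(\Gamma;B_n(\pi))$ at all: it uses only that the image $A=\varphi^*(H^n(\pi;B_n(\pi)))$ is a finite cyclic group, picks $p$ dividing $|A|$, and reads nonvanishing of $\varphi^*_p$ off the naturality square of the UCF by asserting that the left vertical arrow $H^n(B\pi;B_n(\pi))\otimes\Z_p\to H^n(B\Gamma;B_n(\pi))\otimes\Z_p$ is nonzero; your whole final paragraph is an (unsuccessful) attempt to justify precisely that arrow, which in the paper is dispatched in two lines.
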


\begin{proof}
By Corollary~\ref{bothmnflds} we may assume that $\varphi$ is realized by a map of closed aspherical orientable manifolds. 
Let $\cd(\varphi) = k$. Then by Theorem~\ref{hd}, $\hd(\varphi)=k$. By Proposition~\ref{refine}
$$\varphi^*:H^k(B\pi;B_k(\pi))\to H^k(B\Gamma;B_k(\pi))$$ is a nontrivial homomorphism.
By virtue of  Proposition~\ref{Z} the image $$A = \varphi^*(H^k(\pi, B_k(\pi)))\ne 0$$ is a cyclic group.

Case 1: The group $A=\Z$. Then $A \otimes \Q \neq 0$, and
considering the $\Q\Gamma$-module $B_k(\pi) \otimes \Q$, we see that
$\cd_\Q(\varphi) \geq k$. Using Proposition~\ref{integers} we get $\cd_\Q(\varphi) =
k$, and we are done.

Case 2: The group  $A$ is of finite order $m$. Let prime $p$ be a divisor of $m$.  Then the homomorphism 
$$
H^k(\pi, B_k(\pi))\otimes\Z_p\to A\otimes\Z_p
$$
is a nontrivial epimorphism. Since the group $B_k(\pi)\subset C_{k-1}(\pi)$ is torsion free, we have $B_k(\pi)\ast\Z_p=0$. Therefore, the Universal Coefficient Formula (UCF) holds
for $B\pi$. By the UCF there is a commutative diagram of exact sequences
$$
\begin{CD}
0 @>>> H^k(B\pi;B_k(\pi)\otimes\Z_p @>>> H^n(B\pi;B_k(\pi)\otimes\Z_p)\\
@. @V\ne 0VV @V\varphi_p^*VV\\
0 @>>> H^k(B\Gamma;B_k(\pi)\otimes\Z_p @>>> H^n(B\Gamma;B_k(\pi)\otimes\Z_p)\\
\end{CD}
$$
which implies that the homomorphism $$\varphi_p^*:H^n(B\pi;B_k(\pi)\otimes\Z_p)\to H^n(B\Gamma;B_k(\pi)\otimes\Z_p)$$ is nonzero.

Therefore, taking $M = B_k(\pi) \otimes \Z_p$ as
our $\Z_p\pi$-Module, we see that the map $H^k(\pi, M) \to H^k(\Gamma,
M)$ induced by $\varphi$ is nonzero. Therefore, $\cd_{\Z_p} (\varphi) \geq k$.
Again, by Proposition~\ref{integers} we conclude that $\cd_{\Z_p} (\varphi)
=\cd(\varphi)$.
\end{proof}

\section{Cohomological dimension of the product}

\begin{lem}\label{lineq}
 For any two homomorphisms $\phi:  \Gamma\to\pi$ and $\phi':\Gamma'\to\pi'$ and any commutative ring $R$ there is the inequality $\cd_R(\phi\times\phi')\le\cd_R\phi+\cd_R\phi'$.
\end{lem}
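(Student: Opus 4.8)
The plan is to compute $\cd_R(\phi\times\phi')$ using tensor products of projective resolutions together with the chain-level description furnished by Theorem~\ref{chain}. Write $m=\cd_R\phi$ and $m'=\cd_R\phi'$. Fix projective resolutions $P_*(\Gamma),P_*(\pi)$ of $R$ over $R\Gamma,R\pi$ and $P_*(\Gamma'),P_*(\pi')$ of $R$ over $R\Gamma',R\pi'$. Since a projective $R\Gamma$-module is a direct summand of a free $R\Gamma$-module and hence a projective, so flat, $R$-module, the tensor complexes $P_*(\Gamma)\otimes_R P_*(\Gamma')$ and $P_*(\pi)\otimes_R P_*(\pi')$ are exact; because $R[\Gamma\times\Gamma']\cong R\Gamma\otimes_R R\Gamma'$ and the $R$-tensor product of a projective over $R\Gamma$ with a projective over $R\Gamma'$ is projective over the product ring, these tensor complexes are projective resolutions of $R$ for $\Gamma\times\Gamma'$ and $\pi\times\pi'$, respectively.

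First I would invoke Theorem~\ref{chain} for each factor: the chain map $\phi_*:P_*(\Gamma)\to P_*(\pi)$ induced by $\phi$ is chain homotopic to a chain map $\psi_*$ with $\psi_i=0$ for $i>m$, and likewise $\phi'_*$ is chain homotopic to $\psi'_*$ with $\psi'_j=0$ for $j>m'$. The chain map $P_*(\Gamma)\otimes_R P_*(\Gamma')\to P_*(\pi)\otimes_R P_*(\pi')$ realizing $\phi\times\phi'$ may be taken to be $\phi_*\otimes\phi'_*$, and tensoring the two chain homotopies shows that $\phi_*\otimes\phi'_*$ is chain homotopic to $\psi_*\otimes\psi'_*$. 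Since chain-homotopic maps induce the same homomorphism on cohomology, I may compute $(\phi\times\phi')^*$ from $\psi_*\otimes\psi'_*$.

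In degree $n$ the map $\psi_*\otimes\psi'_*$ equals $\bigoplus_{i+j=n}\psi_i\otimes\psi'_j$. If $n>m+m'$, then every decomposition $i+j=n$ forces $i>m$ or $j>m'$, so each summand $\psi_i\otimes\psi'_j$ vanishes; hence $(\psi_*\otimes\psi'_*)_n=0$ for all $n>m+m'$. Consequently the induced homomorphism $(\phi\times\phi')^*:H^n(\pi\times\pi',A)\to H^n(\Gamma\times\Gamma',A)$ is zero for every $(\pi\times\pi')$-module $A$ and every $n>m+m'$, which is exactly the asserted inequality $\cd_R(\phi\times\phi')\le m+m'=\cd_R\phi+\cd_R\phi'$.

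The main obstacle I anticipate is the bookkeeping at the tensor-product step rather than any deep point: one must be sure that the tensor product of the chosen resolutions really is a projective resolution over the product group ring, which is where the flatness of projective $R\Gamma$-modules over $R$ enters, and that the chain map induced by $\phi\times\phi'$ agrees up to chain homotopy with $\phi_*\otimes\phi'_*$. Both are standard homological facts, but over a general ground ring $R$ the exactness of the tensor complex and the projectivity of its terms should be checked explicitly rather than merely quoted as Künneth over a field.
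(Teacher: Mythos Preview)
Your proof is correct and follows essentially the same route as the paper's: both apply Theorem~\ref{chain} to each factor to replace $\phi_*,\phi'_*$ by chain-homotopic maps $\psi_*,\psi'_*$ vanishing above degrees $m,m'$, pass to the tensor product of resolutions, and observe that $(\psi_*\otimes\psi'_*)_n=0$ for $n>m+m'$. You are in fact more explicit than the paper about why $P_*(\Gamma)\otimes_R P_*(\Gamma')$ is a projective resolution over $R[\Gamma\times\Gamma']$ and why the tensor of chain homotopies yields a chain homotopy, which the paper simply asserts.
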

\begin{proof}
Consider projective resolutions $P_*(\Gamma)$, $P_*(\Gamma')$, $P_*(\pi)$, and $P_*(\pi')$ of $R$ for $R\Gamma$, $R\Gamma'$, $R\pi$, and $R\pi'$ modules respectfully.
Let $\phi_*:P_*(\Gamma)\to P_*(\pi)$  and $\phi_*':P_*(\Gamma')\to P_*(\pi')$  be  chain maps between projective resolutions generated by $\phi$ and $\phi'$.
Let $\cd_R(\phi)=m$ and $\cd_R(\phi')=n$. By Theorem~\ref{chain} $\phi_*$ is chain homotopic to a chain map $\psi_*$ with $\psi_i(P_i(\Gamma))=0$ for $i>m$.
Similarly, $\phi_*'$ is chain homotopic to a chain map $\psi_*'$ with $\psi_j(P_j(\Gamma'))=0$ for $j>n$. Then the chain map 
$$(\phi\times\phi')_*:(P(\Gamma)\otimes_R P(\Gamma'))_*\to (P(\pi)\otimes_R P(\pi'))_*$$ is chain homotopic to
$(\psi\times\psi')_*$. Note that $(\psi\times\psi')_k=0$ for $k>m+n$. Therefore, the induced homomorphism $$(\psi\times\psi')^*=(\phi\times\phi')^*:H^k(\pi\times\pi',M)\to H^k(\Gamma\times\Gamma', M)$$ is 0 for $k>m+n$ for any $R(\pi\times\pi')$-module $M$. Hence $\cd_R(\phi\times\phi')\le m+n$.
\end{proof}

\begin{lem} \label{k-ineq}
 For any two homomorphisms $\varphi:  \Gamma\to\pi$ and  $\psi:  \Gamma'\to\pi'$
between geometrically finite groups  and for any field $\k$ there is the inequality $$\cd_\k(\varphi
\times \psi) \geq \cd_\k(\varphi)+\cd_\k(\psi).$$
\end{lem}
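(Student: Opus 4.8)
The plan is to prove the reverse of the inequality in \lemref{lineq}, so that the two lemmas together give the equality $\cd_\k(\varphi\times\psi)=\cd_\k(\varphi)+\cd_\k(\psi)$ over a field. Set $m=\cd_\k(\varphi)$ and $n=\cd_\k(\psi)$. The idea is to produce, in the top degree $m+n$, a single cohomology class on which $(\varphi\times\psi)^*$ does not vanish. First I would use the definition of $\cd_\k$ to select a $\k\pi$-module $A$ together with a class $\alpha\in H^m(\pi,A)$ such that $\varphi^*\alpha\ne 0$ in $H^m(\Gamma,A)$, and likewise a $\k\pi'$-module $A'$ and a class $\alpha'\in H^n(\pi',A')$ with $\psi^*\alpha'\ne 0$ in $H^n(\Gamma',A')$. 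Since $\Gamma,\Gamma',\pi,\pi'$ are geometrically finite, their classifying spaces may be taken to be finite complexes, with $B(\pi\times\pi')=B\pi\times B\pi'$ and $B(\Gamma\times\Gamma')=B\Gamma\times B\Gamma'$, so that the Künneth formula is available for both products.

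Next I would form the external product. The $\k[\pi\times\pi']$-module $A\otimes_\k A'$ corresponds to the local system $\A\hat\otimes\A'$ on $B\pi\times B\pi'$, and the cross product yields $\alpha\times\alpha'\in H^{m+n}(\pi\times\pi',A\otimes_\k A')$. By naturality of the cross product under the product homomorphism $\varphi\times\psi$ one has $(\varphi\times\psi)^*(\alpha\times\alpha')=\varphi^*\alpha\times\psi^*\alpha'$ in $H^{m+n}(\Gamma\times\Gamma',A\otimes_\k A')$, where the coefficient module is pulled back along $\varphi\times\psi$ and agrees with the external tensor product of the two pulled-back systems on $B\Gamma\times B\Gamma'$. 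It therefore suffices to show that the cross product of the two nonzero classes $\varphi^*\alpha$ and $\psi^*\alpha'$ is itself nonzero.

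This is where I would invoke the Künneth theorem over the field $\k$: because $\k$ is a field, the $\ast$-term ($\mathrm{Tor}$ over $\k$) in the Künneth short exact sequence vanishes identically, so the cross product map $\bigoplus_p H^p(\Gamma;A)\otimes_\k H^{m+n-p}(\Gamma';A')\to H^{m+n}(\Gamma\times\Gamma';A\hat\otimes A')$ is injective. Since $\varphi^*\alpha$ and $\psi^*\alpha'$ are nonzero vectors over $\k$, the tensor $\varphi^*\alpha\otimes\psi^*\alpha'$ is a nonzero element of the source, whence $\varphi^*\alpha\times\psi^*\alpha'\ne 0$. Thus $(\varphi\times\psi)^*$ is nonzero in degree $m+n$, giving $\cd_\k(\varphi\times\psi)\ge m+n$. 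The step requiring the most care is the bookkeeping for the coefficient systems, namely verifying that the external tensor product of the pulled-back systems on $B\Gamma\times B\Gamma'$ is exactly the pullback of $\A\hat\otimes\A'$ under $\varphi\times\psi$, so that naturality of the cross product applies as stated, together with the appeal to geometric finiteness needed to put the Künneth formula at our disposal; the remaining input is the routine fact that over a field the cross product is injective.
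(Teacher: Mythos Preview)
Your proposal is correct and is essentially the same argument as the paper's: both choose witnessing modules $A,A'$ and classes in degrees $m,n$, then use the K\"unneth theorem over the field $\k$ (where the Tor term vanishes, so the cross-product map from the direct sum is an isomorphism) together with naturality under $\varphi\times\psi$ to conclude that $(\varphi\times\psi)^*$ is nonzero in degree $m+n$. The only cosmetic difference is that you track a single explicit class $\alpha\times\alpha'$ and invoke injectivity of the K\"unneth map, whereas the paper writes down the full K\"unneth commutative square and reads off that the $(p,q)=(m,n)$ summand of $\bigoplus\varphi^*\otimes\psi^*$ is nonzero.
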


\begin{proof}
We recall that there is
the Kunneth Formula for the sheaf cohomology of compact spaces~\cite{Bre} which in the case of a field degenerates into a natural isomorphism. Thus, for any geometrically finite group $G$ and any $\k
G$-module $M$ for a field $\k$ there is an isomorphism:
\[
   \bigoplus_{p + q = r} H^p(G; M) \otimes_\k H^q (G; M) 
  \to H^{r} (G \times G; M \otimes_\k M') 
.\]
 Let $m = \cd_\k(\varphi)$ and  Let $n = \cd_\k(\psi)$.
Let $M$ to be a
$\k\pi$-module for which $\varphi^*:H^n(\pi;M) \to H^n(\Gamma;M)$ is nonzero
and let $M'$ to be a
$\k\pi'$-module for which $\psi^*:H^n(\pi';M') \to H^n(\Gamma';M')$ is nonzero.
We obtain the following commutative diagram:

\[ \begin{tikzcd}
  0 \arrow{r} 
    & \bigoplus_{p+q=r} H^p(\pi, M) \otimes_\k H^q (\pi', M') \arrow{r}
      \arrow{d} {\bigoplus \varphi^* \otimes \psi^*}
    & H^{r} (\pi \times \pi',M \otimes_\k M') \arrow{r} 
      \arrow{d} {(\varphi \times \psi)^*}
    & 0 \\
  0 \arrow{r} 
    & \bigoplus_{p+q=r} H^p(\Gamma; M) \otimes_\k H^q (\Gamma'; M) \arrow{r} 
    & H^{r} (\Gamma' \times \Gamma; M \otimes_\k M') \arrow{r} 
    & 0 \\
\end{tikzcd} \]
For $r=m+n$ on the left hand side the summand with $p=m$ and $q=n$ gives us a nonzero
homomorphism of vector spaces over $\k$. By
commutativity, we get that the right vertical map is also nonzero in dimension $m+n$. Therefore,
$\cd_\k(\varphi \times \psi) \geq m+n$.
\end{proof}
\begin{cor}
For any two homomorphisms $\varphi:  \Gamma\to\pi$ and  $\psi:  \Gamma'\to\pi'$
between geometrically finite groups  and for any field $\k$ there is the equality $$\cd_\k(\varphi
\times \psi) = \cd_\k(\varphi)+\cd_\k(\psi).$$
\end{cor}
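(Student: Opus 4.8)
The plan is to obtain the equality by sandwiching $\cd_\k(\varphi\times\psi)$ between the two inequalities that have just been established, so the corollary requires no new ideas: it is simply the assembly of Lemma~\ref{lineq} and Lemma~\ref{k-ineq}. First I would observe that a field $\k$ is in particular a commutative ring with unit, so Lemma~\ref{lineq} applies with $R=\k$ to the pair $\varphi:\Gamma\to\pi$, $\psi:\Gamma'\to\pi'$. This gives the upper bound
$$\cd_\k(\varphi\times\psi)\le\cd_\k(\varphi)+\cd_\k(\psi).$$

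Next I would invoke Lemma~\ref{k-ineq}, whose hypotheses match those of the corollary exactly (both $\Gamma,\pi,\Gamma',\pi'$ are geometrically finite and $\k$ is a field). That lemma supplies the reverse inequality
$$\cd_\k(\varphi\times\psi)\ge\cd_\k(\varphi)+\cd_\k(\psi).$$
Combining the two displayed inequalities yields the asserted equality, completing the proof.

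There is essentially no obstacle at this stage: all of the genuine work has already been carried out in the two lemmas. The upper bound rests on Theorem~\ref{chain}, which lets us replace the chain maps $\varphi_*,\psi_*$ by chain maps vanishing above the respective (co)homological dimensions, so that their tensor product vanishes above the sum; the lower bound rests on the degeneration of the Künneth formula over a field into a natural isomorphism, which transports a nonzero product class of bidegree $(m,n)$ on $\pi\times\pi'$ to a nonzero class on $\Gamma\times\Gamma'$. The only point worth a word of care is to confirm that the field hypothesis is genuinely needed only for the lower bound (via the Künneth isomorphism), whereas the upper bound holds over an arbitrary commutative coefficient ring; both directions are in hand, and the corollary follows at once.
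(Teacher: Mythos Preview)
Your proposal is correct and matches the paper's own proof exactly: the paper simply writes ``Apply Lemma~\ref{lineq} and Lemma~\ref{k-ineq},'' and you have spelled out precisely that combination, noting that a field is a commutative ring (so Lemma~\ref{lineq} gives the upper bound) and that the geometric finiteness hypothesis feeds into Lemma~\ref{k-ineq} for the lower bound.
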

\begin{proof}
Apply Lemma~\ref{lineq} and Lemma~\ref{k-ineq}.
\end{proof}
\begin{cor}\label{gineq}
 For any homomorphism $\varphi:  \Gamma\to\pi$ between two  geometrically finite groups there is the inequality $\cd(\varphi
\times \varphi) \geq 2\cd(\varphi)$.
\end{cor}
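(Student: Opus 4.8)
The plan is to reduce the integral statement to a computation over a single well-chosen field, where the product formula is already known to be sharp. First I would apply Theorem~\ref{k-exist} to the homomorphism $\varphi$ to produce a field $\k$ for which $\cd_\k(\varphi)=\cd(\varphi)$; write $n$ for this common value. The whole point of that theorem — which itself rests on the equality $\hd=\cd$ from Theorem~\ref{hd} and on the module computation of Proposition~\ref{Z} — is precisely that the integral cohomological dimension of $\varphi$ is detected by some field of coefficients, so nothing is lost in passing from $\Z$ to $\k$.

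Next I would feed this field into Lemma~\ref{k-ineq}, taking both factors to be $\varphi$ itself, that is, $\psi=\varphi$ with $\Gamma'=\Gamma$ and $\pi'=\pi$. Since $\Gamma$ and $\pi$ are geometrically finite the hypotheses of that lemma are satisfied, and it yields
$$\cd_\k(\varphi\times\varphi)\ge\cd_\k(\varphi)+\cd_\k(\varphi)=2n.$$
The strength of working over a field is essential here: the Kunneth isomorphism degenerates, so that the external tensor square of a nonzero class realizing $\cd_\k(\varphi)$ remains nonzero in the product and produces the sharp lower bound.

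Finally I would transfer back to the integers. By Proposition~\ref{integers} applied to the \emph{product} homomorphism $\varphi\times\varphi$ we have $\cd_\k(\varphi\times\varphi)\le\cd(\varphi\times\varphi)$, and combining this with the previous display gives
$$\cd(\varphi\times\varphi)\ge\cd_\k(\varphi\times\varphi)\ge 2n=2\cd(\varphi),$$
as required. There is essentially no remaining obstacle at this stage, since every hard step has already been absorbed into Theorem~\ref{k-exist} and Lemma~\ref{k-ineq}. The only point demanding a little care is the direction of the inequality in Proposition~\ref{integers}: one must apply $\cd_\k\le\cd$ to the product homomorphism rather than to the individual factors, after which the chain of inequalities closes and the corollary follows.
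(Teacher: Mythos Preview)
Your proof is correct and follows essentially the same route as the paper: invoke Theorem~\ref{k-exist} to obtain a field $\k$ with $\cd_\k(\varphi)=\cd(\varphi)$, use Lemma~\ref{k-ineq} (or the resulting additivity over $\k$) to get $\cd_\k(\varphi\times\varphi)\ge 2\cd_\k(\varphi)$, and then apply Proposition~\ref{integers} to the product homomorphism. The paper compresses these steps into a single chain of inequalities, but the logic is identical.
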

\begin{proof}
By Theorem~\ref{k-exist} there is a field $\k$ such that $\cd(\varphi)=\cd_\k(\varphi)$. Then in view of Proposition~\ref{integers} $$\cd(\varphi\times\varphi)\ge\cd_\k(\varphi\times\varphi)=  2\cd_\k(\varphi)= 2\cd(\varphi).$$
\end{proof}

\begin{thm}
 For any homomorphism $\varphi:  \Gamma\to\pi$ between two  geometrically finite groups there is the equality $\cd(\varphi
\times \varphi)= 2\cd(\varphi)$.
\end{thm}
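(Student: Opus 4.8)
The plan is to obtain the equality by sandwiching $\cd(\varphi\times\varphi)$ between $2\cd(\varphi)$ from both sides, using two results already established in this section. Both inequalities are in hand, so the proof of the theorem itself is a short assembly; the substantive work has been front-loaded into the supporting lemmas.

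First I would establish the upper bound. Applying Lemma~\ref{lineq} with $R=\mathbb Z$ and $\phi=\phi'=\varphi$ gives immediately
\[
\cd(\varphi\times\varphi)=\cd_{\mathbb Z}(\varphi\times\varphi)\le \cd_{\mathbb Z}(\varphi)+\cd_{\mathbb Z}(\varphi)=2\cd(\varphi).
\]
This direction requires nothing beyond the chain-level vanishing argument behind Lemma~\ref{lineq}, which truncates both chain maps via Theorem~\ref{chain} and tensors them. For the lower bound I would simply invoke Corollary~\ref{gineq}, which asserts $\cd(\varphi\times\varphi)\ge 2\cd(\varphi)$. Combining the two inequalities yields $\cd(\varphi\times\varphi)=2\cd(\varphi)$, completing the proof.

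The genuine obstacle is not in this final combination but in the lower bound, and it has already been overcome earlier in the paper. The difficulty is that subadditivity of cohomological dimension for products can fail over $\mathbb Z$, so one cannot argue multiplicatively with integral coefficients directly. The workaround, carried out in Corollary~\ref{gineq}, is to pass to a suitably chosen field $\k$ supplied by Theorem~\ref{k-exist}, for which $\cd_\k(\varphi)=\cd(\varphi)$; over a field the K\"unneth formula degenerates into a natural isomorphism, giving the clean multiplicativity $\cd_\k(\varphi\times\varphi)=2\cd_\k(\varphi)$ via Lemma~\ref{k-ineq}. One then applies Proposition~\ref{integers} to promote the field-coefficient lower bound back to the integral statement, since $\cd(\varphi\times\varphi)\ge\cd_\k(\varphi\times\varphi)$. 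Thus the heart of the matter lies in Theorem~\ref{k-exist} (and hence in the equality $\hd=\cd$ of Theorem~\ref{hd}, together with Davis' trick), and the final theorem is the formal closure of that chain of reductions.
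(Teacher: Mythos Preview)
Your proposal is correct and follows exactly the paper's own proof: apply Lemma~\ref{lineq} with $R=\mathbb Z$ for the upper bound and Corollary~\ref{gineq} for the lower bound. Your additional commentary on where the real work lies (Theorem~\ref{k-exist}, Theorem~\ref{hd}, and the passage to a field via K\"unneth) is accurate and matches the logical structure of the section.
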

\begin{proof}
Apply Lemma~\ref{lineq} with $R=\mathbb Z$ and Corollary~\ref{gineq}.
\end{proof}

\section{On the analog of the Eilenberg-Ganea theorem for homomorphisms}

 The Eilenberg-Ganea equality~\cite{Bro} $\cd(\pi)=\gd(\pi)$ holds true whenever $\cd(\pi)\ge 3$. The Eilenberg-Ganea conjecture
extends this equality to the case of $\cd(\pi)=2$. A potential counter-example should have $\gd(\pi)=3$.
In~\cite{DK} a map $f:W^4\to T^3$ of an aspherical 4-manifold $W^4$ onto a 3-torus is constructed satisfying $\cd(f)=2$ and $\gd(f)=3$.
Below we show the fact that the numbers 2 and 3  in this example are the same as in the Eilenberg-Ganea conjecture is rather coincidental.

We recall the notation $$\pi^k_s(X)=\lim_{\rightarrow}[\Sigma^nX,S^{n+k}]$$ for the stable $k$-cohomotopy group of $X$.
\begin{prop}\label{iterate}
Suppose that the map $f:W\to T$ induces a nontrivial homomorphism $f^*:\pi_s^k(T)\to \pi_s^k(W)$ and satisfies $\cd(f)<k$.
Then the map $f\times 1:W\times S^1\to T\times S^1$ induces a nontrivial homomorphism $$(f\times 1)^*:\pi_s^{k+1}(T\times S^1)\to \pi_s^k(W\times S^1)$$ and satisfies the inequality $\cd(f\times 1)<k+1$.
\end{prop}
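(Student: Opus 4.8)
\emph{Plan.} The statement splits into two independent claims, and I would attack them with different tools: the dimension bound $\cd(f\times 1)<k+1$ follows formally from Lemma~\ref{lineq}, while the nontriviality of $(f\times 1)^*$ on stable cohomotopy rests on the stable splitting of a product with a circle.

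\emph{The cohomological dimension bound.} Since $W$ and $T$ are aspherical, $f$ realizes a homomorphism $\phi=f_*$ with $\cd(f)=\cd(\phi)$, and $f\times 1$ realizes $\phi\times\mathrm{id}_{\mathbb Z}$ on fundamental groups, where $\mathrm{id}_{\mathbb Z}$ is the identity of $\pi_1(S^1)=\mathbb Z$. Applying Lemma~\ref{lineq} over $R=\mathbb Z$ gives
$$\cd(f\times 1)=\cd(\phi\times\mathrm{id}_{\mathbb Z})\le\cd(\phi)+\cd(\mathrm{id}_{\mathbb Z})=\cd(f)+\cd(S^1)=\cd(f)+1.$$
Because $\cd(f)<k$ means $\cd(f)\le k-1$, this yields $\cd(f\times 1)\le k<k+1$, as required.

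\emph{Nontriviality of $(f\times 1)^*$.} Working with unreduced stable cohomotopy $\pi_s^\ast(X)=\widetilde\pi_s^\ast(X_+)$, I would use the standard stable equivalence $S^1_+\simeq_s S^0\vee S^1$, which after smashing with $X_+$ produces a natural stable splitting
$$(X\times S^1)_+\;=\;X_+\wedge S^1_+\;\simeq_s\;X_+\vee\Sigma X_+.$$
Since stable cohomotopy is a cohomology theory, this gives a natural decomposition
$$\pi_s^{k+1}(X\times S^1)\;\cong\;\pi_s^{k+1}(X)\oplus\pi_s^{k}(X),$$
where the second summand comes from the suspension isomorphism $\widetilde\pi_s^{k+1}(\Sigma X_+)\cong\widetilde\pi_s^{k}(X_+)=\pi_s^k(X)$. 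Because the $S^1$-coordinate is mapped by the identity, $(f\times 1)_+=f_+\wedge\mathrm{id}_{S^1_+}$ respects the splitting and is carried to $f_+\vee\Sigma f_+$; hence under the decomposition above $(f\times 1)^*$ is identified with $f^*\oplus f^*$, the second copy being exactly the given map $f^*:\pi_s^k(T)\to\pi_s^k(W)$. As that map is nonzero by hypothesis, $(f\times 1)^*$ is nonzero in degree $k+1$ (its target being $\pi_s^{k+1}(W\times S^1)$ by degree preservation).

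\emph{Main obstacle.} The dimension inequality is immediate from Lemma~\ref{lineq}, so the substantive point is the cohomotopy computation. The crux is to set up the stable splitting so that it is natural in $X$ and then verify that the relevant summand of $(f\times 1)^*$ is precisely the hypothesized map $f^*$ on $\pi_s^k$, rather than some combination also involving $\pi_s^{k+1}$. This reduces to the naturality of the splitting $S^1_+\simeq_s S^0\vee S^1$ together with the fact that the $S^1$-factor is mapped by the identity; matching the suspension isomorphism with the $\Sigma X_+$ summand is where the argument must be carried out with care.
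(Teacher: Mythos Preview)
Your proposal is correct and follows the same overall strategy as the paper: reduce the $\cd$ bound to Lemma~\ref{lineq}, and for the cohomotopy statement establish a natural splitting $\pi_s^{k+1}(X\times S^1)\cong \pi_s^{k+1}(X)\oplus\pi_s^{k}(X)$ under which $(f\times 1)^*$ becomes $f^*\oplus f^*$. The only difference is how the splitting is obtained. The paper covers $S^1=I_+\cup I_-$ and uses the relative Mayer--Vietoris sequence together with the retraction $Y\times S^1\to Y\times x_0$ to split the long exact sequence of the pair $(Y\times S^1,Y\times x_0)$; you instead invoke the stable equivalence $S^1_+\simeq_s S^0\vee S^1$ and smash with $X_+$. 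These are two packagings of the same fact---your cofiber $S^1_+/S^0\simeq S^1$ is exactly the quotient realizing the paper's relative group, and the retraction the paper uses corresponds to your collapse $S^1_+\to S^0$---so neither argument is materially more general, though yours makes the naturality in $X$ (and hence the identification of the second summand of $(f\times 1)^*$ with $f^*$) slightly more transparent.
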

\begin{proof}
Let $S^1=I_+\cup I_-$ with $I_+\cap I_-=S^0=\{x_0,-x_0\}$. We use the relative
Mayer-Vietoris sequence for $\pi^*_s$ to show the isomorphism $$\pi^{i-1}_s(Y)=\pi^{i-1}_s(Y\times -x_0)=\pi^{i-1}_s(Y\times S^0, Y\times x_0)\to\pi^i_s(Y\times S^1,Y\times x_0).$$ The retraction $Y\times S^1\to Y\times x_0$
defines the natural on $Y$ splitting $$\pi^i_s(Y\times S^1)=\pi^i_s(Y)\oplus\pi^{i-1}_s(Y\times S^1, Y\times x_0)=\pi^i_s(Y)\oplus\pi^{i-1}_s(Y).$$
The commutative diagram 
$$
\begin{CD}
\pi^i_s(T\times S^1) @>\cong>> \pi^i_s(T)\oplus\pi^{i-1}_s(T)\\
@V(f\times 1)^*VV @Vf^*\oplus f^*VV\\
\pi^i_s(W\times S^1) @>\cong>> \pi^i_s(W)\oplus\pi^{i-1}_s(W)\\
\end{CD}
$$
with $i=k+1$ implies that the homomorphism $$(f\times 1)^*:\pi_s^{k+1}(T\times S^1)\to \pi_s^k(W\times S^1)$$ is not 0.

The inequality $\cd(f\times 1)<k+1$ follows from Lemma~\ref{lineq}.
\end{proof}

\begin{cor}
For any $k>2$ there is a map between aspherical manifolds $f_k:W^{k+1}\to T^k$ with $\cd(f)<k$ and $\gd(f)=k$.
\end{cor}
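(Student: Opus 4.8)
The plan is to construct the family $\{f_k\}$ by iterated multiplication by a circle, starting from the example of~\cite{DK}, and to let Proposition~\ref{iterate} carry the two properties that matter through the induction: the smallness of $\cd$ and the nontriviality on top stable cohomotopy. The geometric dimension is then squeezed, from above by the dimension of the target and from below by cohomotopy. For the base case I take $f_3=f\colon W^4\to T^3$ from~\cite{DK}, so $\cd(f_3)=2<3$ and $\gd(f_3)=3$. The point I must extract from~\cite{DK} is that its verification of $\gd(f_3)=3$ proceeds by producing a nontrivial homomorphism $f_3^*\colon\pi_s^3(T^3)\to\pi_s^3(W^4)$; this is exactly what makes $f_3$ satisfy the hypotheses of Proposition~\ref{iterate} with $k=3$, and it is the correct seed for the induction.

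For the inductive step I would assume $f_k\colon W^{k+1}\to T^k$ has already been built as a map of closed aspherical orientable manifolds with $\cd(f_k)<k$ and with $f_k^*\colon\pi_s^k(T^k)\to\pi_s^k(W^{k+1})$ nontrivial, and then set
$$
f_{k+1}:=f_k\times 1_{S^1}\colon W^{k+1}\times S^1\longrightarrow T^k\times S^1=T^{k+1}.
$$
Here $W^{k+1}\times S^1$ is again a closed aspherical orientable manifold (of dimension $k+2$) and $T^{k+1}$ is a torus, so $f_{k+1}$ is again a map between aspherical manifolds. Proposition~\ref{iterate} then hands me both $\cd(f_{k+1})<k+1$ and the nontriviality of $f_{k+1}^*\colon\pi_s^{k+1}(T^{k+1})\to\pi_s^{k+1}(W^{k+2})$, so the inductive hypotheses are reproduced one level up. The cohomological dimension bound can also be seen directly from Lemma~\ref{lineq}, which gives $\cd(f_k)\le\cd(f_3)+(k-3)=k-1<k$.

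It remains to read off $\gd(f_k)=k$. The upper bound $\gd(f_k)\le k$ is free: the standard cell structure on $T^k$ is $k$-dimensional, so the realizing map $\bar f_k$ already lands in $(T^k)^{(k)}$. For the lower bound I would use $\gd(f_k)=\cat(\bar f_k)$ together with the stable-cohomotopy category-weight estimate for maps: the generator of the top group $\pi_s^k(T^k)$ is a $k$-fold product of the one-dimensional classes pulled back from the circle factors, and if $\cat(\bar f_k)\le k-1$ then every $k$-fold product of reduced classes lying in the image of $f_k^*$ must vanish. Since $f_k^*$ is nontrivial on this class, this forces $\cat(\bar f_k)\ge k$, and hence $\gd(f_k)=k$.

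The one genuinely nontrivial ingredient is this lower bound $\gd(f_k)\ge k$. The algebraic content -- the $\cd$ estimate and the propagation of cohomotopy nontriviality -- is supplied by Proposition~\ref{iterate}, and the upper bound for $\gd$ costs nothing. The hard part is therefore the passage from \emph{nontrivial on degree-$k$ stable cohomotopy} to \emph{category at least $k$}, i.e.\ justifying the cohomotopy lower bound for the Lusternik--Schnirelmann category of a map, together with the companion fact that the base example of~\cite{DK} detects $\gd=3$ by precisely such a cohomotopy class. Once these are in hand, the induction runs cleanly for every $k>2$.
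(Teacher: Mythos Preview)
Your proposal is correct and follows the paper's approach exactly: start from the map $f\colon W^4\to T^3$ of~\cite{DK}, cross with $S^1$ repeatedly, and let Proposition~\ref{iterate} carry forward both $\cd(f_k)<k$ and the nontriviality of $f_k^*$ on $\pi_s^k$. The only place you diverge is in justifying $\gd(f_k)\ge k$: you invoke a cup-length/category-weight bound in stable cohomotopy, which is valid but heavier than necessary. The more direct argument (implicit in the paper's terse proof) is purely dimensional: if $\bar f_k$ were homotopic to a map into $(T^k)^{(k-1)}$, then $f_k^*$ would factor through $\pi_s^k\big((T^k)^{(k-1)}\big)$, and this group vanishes since any stable map from a $(k-1)$-dimensional complex to $S^k$ is null by cellular approximation.
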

\begin{proof}
We note that the map $f:W^4\to T^3$ from~\cite{DK} induces a nontrivial isomorphism $f^*:\pi_s^3(T)\to \pi_s^3(W^4)$. We  cross it with $S^1$ and apply Proposition~\ref{iterate} . Then we  iterate this process  until the range is a $k$-torus $T^k$.
\end{proof}

\end{document}